\newtheorem{lemma}{Lemma}[section]
\newtheorem{proposition}[lemma]{Proposition}
\newtheorem{theorem}[lemma]{Theorem}
\theoremstyle{definition}
\newtheorem{definition}[lemma]{Definition}
\newtheorem{remark}[lemma]{Remark}
\newcommand{\N}{\mathds{N}}
\newcommand{\R}{\mathds{R}}
\newcommand{\C}{\mathds{C}}
\newcommand{\K}{\mathds{K}}
\newcommand{\1}{\mathds{1}}
\newcommand{\e}{\mathrm{e}}
\newcommand{\topo}{\tau}
\newcommand{\co}{\mathrm{co}}
\newcommand{\calB}{\mathcal{B}}
\newcommand{\calF}{\mathcal{F}}
\newcommand{\calL}{\mathcal{L}}
\newcommand{\calP}{\mathcal{P}}
\newcommand{\calZ}{\mathcal{Z}}
\providecommand{\abs}[1]{\left\lvert#1\right\rvert}
\providecommand{\norm}[1]{\left\lVert#1\right\rVert}
\newcommand{\dupa}[2]{\left\langle #1, #2\right\rangle}
\providecommand{\differential}{\mathrm{d}}
\newcommand{\drm}{\mathrm{d}}
\newcommand{\euler}{\mathrm{e}}
\DeclareMathOperator{\TextRe}{Re}
\renewcommand{\d}{\differential}
\renewcommand{\Re}{\TextRe}
\newcommand{\from}{\colon}
\DeclareMathOperator{\spt}{spt}
\DeclareMathOperator*{\esssup}{ess\,sup}
\newcommand\rlim{
\mathchoice{\vcenter{\hbox{${\scriptstyle{+}}$}}}
{\vcenter{\hbox{$\scriptstyle{+}$}}}
{\vcenter{\hbox{$\scriptscriptstyle{+}$}}}
{\vcenter{\hbox{$\scriptscriptstyle{+}$}}}}
\DeclareMathAlphabet{\mathcal}{OMS}{cmsy}{m}{n}
\newcommand{\vertiii}[1]{{\left\vert\kern-0.25ex\left\vert\kern-0.25ex\left\vert #1 
    \right\vert\kern-0.25ex\right\vert\kern-0.25ex\right\vert}}
\begin{document}

\title[Observability and null-controllability for bi-continuous semigroups]{Final state observability estimates and cost-uniform approximate null-controllability for bi-continuous semigroups}
\author[K.~Kruse]{Karsten Kruse\,\orcidlink{0000-0003-1864-4915}}
\address[Karsten Kruse, Christian Seifert]{Hamburg University of Technology\\ Institute of Mathematics \\ Am Schwarzenberg-Campus~3 \\
21073 Hamburg \\
Germany}
\email{karsten.kruse@tuhh.de}
\author[C.~Seifert]{Christian Seifert\,\orcidlink{0000-0001-9182-8687}}
\email{christian.seifert@tuhh.de}

\subjclass[2020]{Primary 93C20, 93C25, 47N70, Secondary 47D06, 46A70}

\keywords{final state observability estimate, bi-continuous semigroups, cost-uniform approximate null-controllability, Saks space, mixed topology}

\date{\today}
\begin{abstract}
    We consider final state observability estimates for bi-continuous semigroups on Banach spaces, i.e.\ for every initial value, estimating the state at a final time $T>0$ by taking into account the orbit of the initial value under the semigroup for $t\in [0,T]$, measured in a suitable norm. We state a sufficient criterion based on an uncertainty relation and a dissipation estimate and provide two examples of bi-continuous semigroups which share a final state observability estimate, namely the Gau{\ss}--Weierstra{\ss} semigroup and the Ornstein--Uhlenbeck semigroup on the space of bounded continuous functions.
    Moreover, we generalise the duality between cost-uniform approximate null-controllability and final state observability estimates to the setting of locally convex spaces for the case of bounded and continuous control functions, which seems to be new even for the case of Banach spaces.
\end{abstract}
\maketitle

\section{Introduction}
\label{sec:intro}

Let $X$ be a Banach space and $(S_t)_{t\geq 0}$ a semigroup on $X$, i.e.\ $S_0 = I$ and $S_{t+s} = S_t S_s$ for all $t,s\geq 0$. Moreover, let $Y$ be another Banach space and $C\in \calL(X;Y)$, a so-called observation operator, and $T>0$. Then $(S_t)_{t\geq0}$ satisfies a \emph{final state observability estimate} w.r.t.\ some (Banach) space $\calZ$ of functions on $[0,T]$ with values in $Y$, if there exists $C_{\mathrm{obs}}\geq 0$ such that
\[\norm{S_T x}_X \leq C_{\mathrm{obs}} \norm{CS_{(\cdot)} x}_{\calZ} \qquad(x\in X).\]
Put differently, we can estimate the norm of the final state $S_Tx$ by just taking into account the observations $CS_tx$ for $t\in [0,T]$. Typical applications stem from evolution equations on some function space over (a subset of) $\R^d$, where $C$ is a restriction operator to a suitable subset $\Omega$ of $\R^d$ (or of the subset of $\R^d$ the functions are defined on) such that we want to control the final state on all of $\R^d$ by just measuring the evolution on the subset $\Omega$. Final state observability estimates have been studied in various contexts due to its relation to null-controllability, see e.g.\ \cite{Carja-88,LebeauR-95,Vieru-05,YuLC-06,Miller-10,NakicTTV-20,GallaunST-20,BGST2022,GMS2022,BGST2023} and references therein.

Classically, the space $\calZ$ is some $L_r$-space with $r\in [1,\infty]$ (when working in Hilbert spaces, one usually chooses $r=2$), and then the final state observability estimate yields the form
\[\norm{S_T x}_X \leq \begin{cases}
    C_{\mathrm{obs}} \left(\int_0^T \norm{CS_t x}_Y^r \drm t\right)^{1/r} & \text{if } r\in [1,\infty),\\
    C_{\mathrm{obs}}\esssup_{t\in [0,T]} \norm{CS_t x}_Y  & \text{if } r=\infty, 
    \end{cases} \qquad(x\in X).\]
Clearly, in order to formulate this final state observability estimate (i.e.\ to have a well-defined right-hand side) we need some regularity of the semigroup. Indeed, we require measurability of $t\mapsto \norm{CS_t x}_Y$ for all $x\in X$. Of course, strong continuity of $(S_t)_{t\geq0}$ yields continuity of these maps and is therefore sufficient, but also weaker regularities are suitable. In \cite{BGST2022}, dual semigroups of strongly continuous semigroups were considered which yield sufficient regularity.

In this short note we aim at two types of results. First, we consider final state observability estimates for so-called bi-continuous semigroups, see e.g.\ \cite{kuehnemund2001,kuehnemund2003}, which are not strongly continuous for the norm-topology on $X$ but only for a weaker topology. Note that dual semigroups are a special case of bi-continuous ones when considering the weak$^*$ topology. 
There are classical examples of bi-continuous semigroups such as the Gau{\ss}--Weierstra{\ss} semigroup on $C_b(\R^d)$, the space of bounded continuous functions (on $\R^d$), as well as the Ornstein--Uhlenbeck semigroup on $C_b(\R^d)$.
Second, we relate cost-uniform approximate null-controllability of a control system sharing only weak continuity properties such as bi-continuity with a final state observability estimate for the dual system, thus generalising the well-known duality in Hilbert and Banach spaces \cite{Douglas-66,Carja-88,Vieru-05,YuLC-06}. Since this demands to work in Hausdorff locally convex spaces, we here focus on continuous control functions which results in the space $\calZ$ above being a space of vector measures.

The paper is organised as follows. In \prettyref{sec:bi-continuous} we review bi-continuous semigroups and then turn to final-state observability estimates in \prettyref{sec:obs} together with two examples in \prettyref{sec:application}.
Final state observability estimates are then related with cost-uniform approximate null-controllability via duality, which we will exploit in our context in \prettyref{sec:Null-controllability}.

\section{Bi-Continuous Semigroups}
\label{sec:bi-continuous}

In this short section we recall some notation and definitions from the theory of bi-continuous semigroups that we need in 
subsequent sections. For a vector space $X$ over the field $\R$ or $\C$ with a Hausdorff locally convex topology $\topo_X$ 
we denote by $(X,\topo_X)'$ the topological linear dual space and just write $X':=(X,\topo_X)'$ 
if $(X,\topo_X)$ is a Banach space.
We use the symbol $\calL(X;Y):=\calL((X,\norm{\cdot}_{X});(Y,\norm{\cdot}_{Y}))$ 
for the space of continuous linear operators from a Banach space $(X,\norm{\cdot}_{X})$ 
to a Banach space $(Y,\norm{\cdot}_{Y})$ and denote by $\norm{\cdot}_{\mathcal{L}(X;Y)}$ the operator norm on 
$\calL(X;Y)$. If $X=Y$, we set $\calL(X):=\calL(X;X)$.

\begin{definition}[{\cite[I.3.2 Definition]{cooper1978}}]\label{defi:saks_mixed}
    Let $(X,\norm{\cdot}_X)$ be a normed space and $\topo_X$ a Hausdorff locally convex topology on $X$.
 \begin{enumerate}
    \item The triple $(X,\norm{\cdot}_X,\topo_{X})$ is called a \emph{Saks space} if $(X,\norm{\cdot}_X)$ is a Banach space, 
    $\topo_{X}\subseteq\topo_{\norm{\cdot}_X}$ and $(X,\topo_{X})'$ is norming for $X$ where 
    $\topo_{\norm{\cdot}_X}$ denotes the $\norm{\cdot}_X$-topology.
    \item The \emph{mixed topology} $\gamma_X\coloneq\gamma(\norm{\cdot}_X,\topo_X)$ is
	the finest linear topology on $X$ that coincides with $\topo_X$ on $\norm{\cdot}_X$-bounded sets and such that 
	$\topo_X\subseteq\gamma_X \subseteq\topo_{\norm{\cdot}_X}$.
\end{enumerate}
\end{definition}

The mixed topology is actually Hausdorff locally convex and the definition given above is equivalent to the one 
from the literature \cite[Section 2.1]{wiweger1961} by \cite[Lemmas 2.2.1, 2.2.2]{wiweger1961}. 

\begin{definition} 
We call a Saks space $(X,\norm{\cdot}_X,\topo_{X})$ \emph{sequentially complete} if $(X,\gamma_X)$ is sequentially complete.
\end{definition}

Due to \cite[Corollary 2.3.2]{wiweger1961} a Saks space $(X,\norm{\cdot}_X,\topo_{X})$ is sequentially complete 
if and only if $(X,\topo_{X})$ is sequentially complete on $\norm{\cdot}_X$-bounded sets, i.e.~every $\norm{\cdot}_X$-bounded 
$\topo_{X}$-Cauchy sequence converges in $X$. In combination with \cite[Remark 2.3 (c)]{kruse_schwenninger2022} 
this yields that a triple $(X,\norm{\cdot}_X,\topo_{X})$ fulfils \cite[Assumptions 1]{kuehnemund2003} 
if and only if it is a sequentially complete Saks space.

\begin{definition}[{\cite[Definition 3]{kuehnemund2003}}]
  \label{defi:bicontinuoussg}
  Let $(X,\norm{\cdot}_X,\topo_{X})$ be a sequentially complete Saks space.
  Let $(S_t)_{t\geq 0}$ in $\calL(X)$ be a semigroup on $X$. We say that $(S_t)_{t\geq 0}$ is
  \emph{(locally) $\topo_X$-bi-continuous} if
  \begin{enumerate}
   \item it is exponentially bounded, i.e.~there exist $M\geq 1$ and $\omega\in\R$ 
   such that $\norm{S_t}_{\calL(X)}\leq M\euler^{\omega t}$ for all $t\geq 0$,
    \item
      $(S_t)_{t\geq 0}$ is a $C_0$-semigroup on $(X, \topo_X)$, i.e.\ for all $x\in X$ the map $[0,\infty)\ni t\mapsto T_tx\in (X,\topo_X)$ is continuous,
    \item it is (locally) bi-equicontinuous, i.e.~for every $(x_n)_{n\in\N}$ in $X$ and $x\in X$ with $\sup\limits_{n\in\N}\norm{x_n}_X<\infty$ and $\topo_X\text{-}\lim\limits_{n\to\infty} x_n = x$ we have
     \[\topo_X\text{-}\lim_{n\to\infty} S_t (x_n-x) = 0\]
     (locally) uniformly for $t\in [0,\infty)$.
  \end{enumerate}
\end{definition}

As in the case of $C_0$-semigroups on Banach spaces we can define generators for bi-continuous semigroups.

\begin{definition}[{\cite[Definition 1.2.6]{farkas2003}}]
Let $(X,\norm{\cdot}_X,\topo_{X})$ be a sequentially complete Saks space
and $(S_t)_{t\geq 0}$ a locally $\topo_{X}$-bi-continuous semigroup on $X$. The \emph{generator} $(-A,D(A))$ 
is defined by
\begin{align*}
D(A)\coloneqq &\Bigl\{x\in X\;|\;\topo_{X}\text{-}\lim_{t\to 0\rlim}\frac{S_{t}x-x}{t}\;\text{exists in } X\;
\text{and }\sup_{t\in(0,1]}\frac{\norm{S_{t}x-x}_{X}}{t}<\infty\Bigr\},\\
-Ax\coloneqq &\topo_{X}\text{-}\lim_{t\to 0\rlim}\frac{S_{t}x-x}{t}\quad (x\in D(A)).
\end{align*}
\end{definition}

\begin{remark}
There is no common agreement whether to use the here presented definition of a generator or its negative. 
Throughout the entire paper we will stick to the definition made above, i.e.~$-A$ is the generator. 
\end{remark}

\section{Final State Observability Estimates for Bi-Continuous Semigroups}
\label{sec:obs}

The final state observability estimate rests on the following abstract theorem. It provides a sufficient criterion stating that an abstract uncertainty principle (also called spectral inequality), see \eqref{eq:UP_BS}, together with a dissipation estimate, see \eqref{eq:DISS_BS}, yields a final state observability estimate, and has its roots in \cite{LebeauR-95}, see also \cite{Miller-10,NakicTTV-20,GallaunST-20,BGST2022}.

\begin{theorem}[{\cite[Theorem A.1]{BGST2022}}] \label{thm:spectral+diss-obs} 
Let $X$ and $Y$ be Banach spaces, $C\in\calL(X;Y)$, $(S_t)_{t\geq 0}$ a semigroup on $X$, $M \geq 1$ and $\omega \in \R$ such that $\norm{S_t}_{\calL(X)} \leq M \euler^{\omega t}$ for all $t \geq 0$, 
and assume that for all $x\in X$ the map $t\mapsto \norm{C S_t x}_Y$ is measurable.
Further, let $\lambda^*\geq 0$, $(P_\lambda)_{\lambda>\lambda^*}$ in $\calL(X)$,
$r \in [1,\infty]$, $d_0,d_1,d_3,\gamma_1,\gamma_2,\gamma_3,T > 0$ with $\gamma_1 < \gamma_2$, and $d_2\geq 1$, and assume that
\begin{equation} \label{eq:UP_BS}
\forall x\in X \ \forall \lambda > \lambda^* \colon \quad 
\norm{P_\lambda x}_{ X } \le d_0 \euler^{d_1 \lambda^{\gamma_1}} \norm{C  P_\lambda x}_{Y }
\tag{UP}
\end{equation}
and
\begin{equation}\label{eq:DISS_BS}
\forall x\in X \ \forall \lambda > \lambda^* \ \forall t\in (0,T/2] \colon \quad 
\norm{(I-P_\lambda) S_t x}_{X} \le d_2 \euler^{-d_3 \lambda^{\gamma_2} t^{\gamma_3}} \norm{x}_{X}.
\tag{DISS}
\end{equation}
Then there exists $C_{\mathrm{obs}}\geq 0$ such that for all $x \in X$ we have
\[
 \norm{S_T x}_{X} \leq \begin{cases}
    C_{\mathrm{obs}} \left(\int_0^T \norm{CS_t x}_Y^r \drm t\right)^{1/r} & \text{if } r\in [1,\infty),\\
    C_{\mathrm{obs}}\esssup_{t\in [0,T]} \norm{CS_t x}_Y & \text{if } r=\infty .
 \end{cases} 
\]
\end{theorem}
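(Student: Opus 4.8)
The plan is to follow the now-classical Lebeau--Robbiano-type telescoping argument, adapted to the bi-continuous setting by using only the quantitative ingredients \eqref{eq:UP_BS} and \eqref{eq:DISS_BS} together with the exponential bound on $(S_t)_{t\geq0}$; the measurability hypothesis is exactly what is needed to make the right-hand side of the asserted estimate meaningful. First I would fix $x\in X$, split any state $S_tx$ as $P_\lambda S_tx + (I-P_\lambda)S_tx$, estimate the first summand by \eqref{eq:UP_BS} in the form $\norm{P_\lambda S_t x}_X \le d_0 \euler^{d_1\lambda^{\gamma_1}} \norm{C P_\lambda S_t x}_Y$, and control $\norm{C P_\lambda S_t x}_Y \le \norm{C S_t x}_Y + \norm{C}_{\calL(X;Y)}\norm{(I-P_\lambda)S_t x}_X$, bounding the last term via \eqref{eq:DISS_BS}. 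For the second summand one applies \eqref{eq:DISS_BS} directly. This yields, for suitable time increments, a recursive inequality comparing $\norm{S_{t}x}_X$ at a later time with an average of $\norm{CS_sx}_Y$ over an interval plus a term $\propto \euler^{d_1\lambda^{\gamma_1} - d_3\lambda^{\gamma_2}\tau^{\gamma_3}}\norm{S_{s}x}_X$ coming from the dissipation.

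The combinatorial heart is then the choice of a sequence of times $T = \tau_0 > \tau_1 > \tau_2 > \cdots \to 0$ (e.g.\ $\tau_k - \tau_{k+1} \sim 2^{-k}$ scaled to fit in $[0,T/2]$) and an increasing sequence of frequencies $\lambda_k \to \infty$ tuned so that, because $\gamma_1 < \gamma_2$, the dissipation exponent $-d_3\lambda_k^{\gamma_2}(\tau_k-\tau_{k+1})^{\gamma_3}$ beats the uncertainty exponent $d_1\lambda_k^{\gamma_1}$ and the product of the resulting small factors over $k$ telescopes to something summable. Iterating the one-step inequality along this sequence and using $\norm{S_{\tau_k}x}_X \to \norm{S_0 x}_X\cdot(\text{bounded})$ — more precisely using exponential boundedness to absorb the tail $\norm{S_{\tau_k}x}_X \le M\euler^{|\omega|T}\norm{x}_X$ against a factor tending to $0$ — one arrives at $\norm{S_Tx}_X \le C_{\mathrm{obs}}\sum_k (\text{weight}_k)\,\sup_{s\in[\tau_{k+1},\tau_k]}\norm{CS_sx}_Y$ with summable weights, and finally one passes from the sup over small intervals to the $L_r$-norm, respectively the $\esssup$, on $[0,T]$ by Hölder (for $r<\infty$, shrinking the effective interval so that the $\sup$ is dominated by an $L_r$-average over a slightly larger interval, which is where measurability enters to guarantee the integral exists) and trivially for $r=\infty$.

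**Main obstacle.** Since the referenced statement is quoted verbatim as \cite[Theorem A.1]{BGST2022}, the cleanest route is simply to invoke that result, so strictly speaking no proof is required here; but if one reproduces it, the delicate point is the bookkeeping in the iteration — verifying that the geometric choice of $(\tau_k)$ and the power choice of $(\lambda_k)$ (which must depend on $d_1,d_3,\gamma_1,\gamma_2,\gamma_3$ and on $T$) really do produce summable weights uniformly in $x$, and that all intermediate times stay within the range $(0,T/2]$ where \eqref{eq:DISS_BS} is available, shifting the ``base point'' by $T/2$ at the start so that $S_Tx = S_{T/2}(S_{T/2}x)$ and the whole telescoping happens on $(0,T/2]$. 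The transition to the $L_r$-norm is routine once the interval-sup estimate is in hand. Note that nothing in the argument uses bi-continuity per se: only the operator-norm bound and the measurability of $t\mapsto\norm{CS_tx}_Y$ are needed, which is precisely why the abstract theorem applies equally in the bi-continuous framework.
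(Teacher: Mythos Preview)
Your proposal is correct: the paper does not prove this theorem at all but simply imports it verbatim from \cite[Theorem A.1]{BGST2022}, exactly as you note under ``Main obstacle''. Your sketch of the Lebeau--Robbiano telescoping argument is an accurate description of how that cited result is established, but for the purposes of this paper no proof is given or required.
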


\begin{remark}
\label{rem:constant}
The constant $C_{\mathrm{obs}}$ is explicit in all parameters and of the form
\begin{align*}
 C_{\mathrm{obs}} = \frac{C_1}{T^{1/r}} \exp \left(\frac{C_2}{T^{\frac{\gamma_1 \gamma_3}{\gamma_2 - \gamma_1}}} +   C_3 T\right), 
\end{align*}
with $T^{1/r} = 1$ if $r=\infty$, and suitable constants $C_1, C_2, C_3\geq 0$ depending on the parameters; see \cite[Theorem A.1]{BGST2022} as well as \cite[Theorem 2.1]{GMS2022}.
%
\end{remark}

In order to obtain a version of this theorem for bi-continuous semigroups, we need to argue on measurability of $[0,\infty)\ni t\mapsto\norm{CS_t x}_Y$ for all $x\in X$.

\begin{lemma}
\label{lem:measurability}
Let $(X,\norm{\cdot}_X,\topo_X)$ be a sequentially complete Saks space, $(S_t)_{t\geq 0}$ a locally $\topo_X$-bi-continuous semigroup on $X$, $(Y,\norm{\cdot}_Y,\topo_{Y})$ a Saks space, $C\colon X\to Y$ linear and sequentially $\topo_X$-$\topo_Y$-continuous 
on $\norm{\cdot}_X$-bounded sets, and $x\in X$. Then $[0,\infty)\ni t\mapsto\norm{CS_t x}_Y$ is measurable.
\end{lemma}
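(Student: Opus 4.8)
The plan is to show that the $Y$-valued orbit map $t \mapsto CS_t x$ is continuous into $(Y,\topo_Y)$, and then to write $t \mapsto \norm{CS_t x}_Y$ as a pointwise supremum of continuous real functions; such a supremum is lower semicontinuous, hence Borel measurable.

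For the first part I would fix $T > 0$ and use the exponential boundedness from \prettyref{defi:bicontinuoussg}: with $M \ge 1$ and $\omega \in \R$ such that $\norm{S_t}_{\calL(X)} \le M\euler^{\omega t}$, the segment $\{S_t x : t \in [0,T]\}$ lies in the $\norm{\cdot}_X$-bounded set $B := \{y \in X : \norm{y}_X \le M\euler^{\abs{\omega}T}\norm{x}_X\}$. The $C_0$-property of the semigroup for $\topo_X$ gives that $[0,T] \ni t \mapsto S_t x$ is $\topo_X$-continuous, and it takes values in $B$; hence for $t_n \to t$ in $[0,T]$ we get $S_{t_n}x \to S_t x$ in $\topo_X$ with all terms in the bounded set $B$, so sequential $\topo_X$-$\topo_Y$-continuity of $C$ on $\norm{\cdot}_X$-bounded sets yields $CS_{t_n}x \to CS_t x$ in $\topo_Y$. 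Thus $[0,T] \ni t \mapsto CS_t x$ is sequentially continuous into $(Y,\topo_Y)$, and since $[0,T]$ is metrizable it is continuous; letting $T \to \infty$ this holds on all of $[0,\infty)$.

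For the second part I would invoke that $(Y,\norm{\cdot}_Y,\topo_Y)$ is a Saks space, so that $(Y,\topo_Y)'$ is norming for $Y$ (see \prettyref{defi:saks_mixed}), i.e.
\[ \norm{y}_Y = \sup\bigl\{\abs{\varphi(y)} : \varphi \in (Y,\topo_Y)',\ \norm{\varphi}_{Y'} \le 1\bigr\} \qquad (y \in Y). \]
For each fixed admissible $\varphi$ the map $[0,\infty) \ni t \mapsto \abs{\varphi(CS_t x)}$ is continuous, being the composition of the $\topo_Y$-continuous map $t \mapsto CS_t x$ with the $\topo_Y$-continuous functional $\varphi$ and $\abs{\cdot}$. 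Consequently $t \mapsto \norm{CS_t x}_Y$ is a pointwise supremum over $\varphi$ of these continuous functions, hence lower semicontinuous, and in particular Borel measurable, as claimed.

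The only delicate point is the first part: since $C$ is assumed only sequentially continuous on $\norm{\cdot}_X$-bounded sets and not continuous on all of $X$, the argument must be confined to the bounded orbit segment $B$, and metrizability of the parameter interval is needed to upgrade sequential continuity of $t \mapsto CS_t x$ to genuine continuity. After that, the lower-semicontinuity argument is routine.
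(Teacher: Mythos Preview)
Your proof is correct and follows essentially the same approach as the paper: both establish that $t\mapsto \abs{\langle y',CS_t x\rangle}$ is continuous for each $y'\in (Y,\topo_Y)'$ (using exponential boundedness to keep the orbit $\norm{\cdot}_X$-bounded and the sequential continuity hypothesis on $C$), then invoke the norming property of $(Y,\topo_Y)'$ to write $\norm{CS_t x}_Y$ as a pointwise supremum of continuous functions, hence lower semicontinuous and measurable. Your version spells out in more detail the step the paper compresses into ``by the assumptions on $C$ and the exponential boundedness of $(S_t)_{t\geq 0}$'', in particular the use of metrizability of $[0,T]$ to pass from sequential continuity to continuity.
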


\begin{proof}
    Since $[0,\infty) \ni t\mapsto \abs{\dupa{y'}{CS_tx}}$ is continuous for all $y'\in (Y,\topo_Y)'$ 
    by the assumptions on $C$ and the exponential boundedness of $(S_t)_{t\geq 0}$, and
    $\norm{CS_t x}_Y = \sup \{ \abs{\dupa{y'}{CS_tx}}\ | \ y'\in (Y,\topo_Y)',\, \norm{y'}_{Y'}\leq 1\}$ for all $t\geq 0$, we obtain that 
    $[0,\infty)\ni t\mapsto\norm{CS_t x}_Y$ is lower semi-continuous and hence measurable.
\end{proof}

In view of Lemma \ref{lem:measurability}, we can apply Theorem \ref{thm:spectral+diss-obs} to obtain the following.

\begin{theorem} \label{thm:spectral+diss-obs_bi-continuous} 
Let $(X,\norm{\cdot}_X,\topo_X)$ be a sequentially complete Saks space, $(S_t)_{t\geq 0}$ a locally $\topo_X$-bi-continuous semigroup on $X$, $(Y,\norm{\cdot}_Y,\topo_Y)$ a Saks space, $C\in\calL(X;Y)$ such that $C$ is also sequentially $\topo_X$-$\topo_Y$-continuous 
on $\norm{\cdot}_X$-bounded sets.
Further, let $\lambda^* \geq 0$, $(P_\lambda)_{\lambda>\lambda^*}$ a family in $\calL(X)$,
$r \in [1,\infty]$, $d_0,d_1,d_3,\gamma_1,\gamma_2,\gamma_3,T > 0$ with $\gamma_1 < \gamma_2$, and $d_2\geq 1$, and assume that
\begin{equation} \label{eq:UP}
\forall x\in X \ \forall \lambda > \lambda^* \colon \quad 
\norm{P_\lambda x}_{ X } \le d_0 \euler^{d_1 \lambda^{\gamma_1}} \norm{C  P_\lambda x}_{Y }
\tag{UP'}
\end{equation}
and
\begin{equation}\label{eq:DISS}
\forall x\in X \ \forall \lambda > \lambda^* \ \forall t\in (0,T/2] \colon \quad \norm{(I-P_\lambda) S_t x}_{X} \le d_2 \euler^{-d_3 \lambda^{\gamma_2} t^{\gamma_3}} \norm{x}_{X}.
\tag{DISS'}
\end{equation}
Then there exists $C_{\mathrm{obs}}\geq 0$ such that for all $x \in X$ we have
\begin{equation*} 
 \norm{S_T x }_{X} \leq \begin{cases}
    C_{\mathrm{obs}} \left(\int_0^T \norm{CS_t x}_Y^r \drm t\right)^{1/r} & \text{if } r\in [1,\infty),\\
    C_{\mathrm{obs}}\esssup_{t\in [0,T]} \norm{CS_t x}_Y & \text{if } r=\infty .
 \end{cases} 
\end{equation*}
\end{theorem}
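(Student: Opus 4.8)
The plan is to deduce Theorem \ref{thm:spectral+diss-obs_bi-continuous} directly from the abstract Banach-space result, Theorem \ref{thm:spectral+diss-obs}, by verifying that all of its hypotheses are met in the present situation. The only structural point where the two statements differ is that here we work with a bi-continuous semigroup rather than a semigroup whose orbit maps $t \mapsto CS_t x$ are manifestly measurable; everything else -- the operators $P_\lambda$, the exponents $\gamma_i$, the constants $d_i$, and the two estimates \eqref{eq:UP} and \eqref{eq:DISS} -- is literally the same as \eqref{eq:UP_BS} and \eqref{eq:DISS_BS}. So the crux is to check the measurability assumption ``for all $x \in X$ the map $t \mapsto \norm{CS_t x}_Y$ is measurable'' required by Theorem \ref{thm:spectral+diss-obs}.

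That measurability is exactly the content of Lemma \ref{lem:measurability}. To invoke it I first note that the hypotheses of the theorem furnish precisely what the lemma needs: $(X,\norm{\cdot}_X,\topo_X)$ is a sequentially complete Saks space, $(S_t)_{t\geq 0}$ is a locally $\topo_X$-bi-continuous semigroup on $X$, $(Y,\norm{\cdot}_Y,\topo_Y)$ is a Saks space, and $C \in \calL(X;Y)$ is in addition sequentially $\topo_X$-$\topo_Y$-continuous on $\norm{\cdot}_X$-bounded sets. Hence Lemma \ref{lem:measurability} applies and yields, for every $x \in X$, that $[0,\infty) \ni t \mapsto \norm{CS_t x}_Y$ is measurable.

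With this in hand the remaining verification is bookkeeping. Since $(S_t)_{t\geq 0}$ is locally $\topo_X$-bi-continuous it is in particular exponentially bounded, so there are $M \geq 1$ and $\omega \in \R$ with $\norm{S_t}_{\calL(X)} \leq M\euler^{\omega t}$ for all $t \geq 0$, which is the growth bound required by Theorem \ref{thm:spectral+diss-obs}. The operator $C$ is assumed to lie in $\calL(X;Y)$, and $X$, $Y$ are Banach spaces underlying their respective Saks-space structures, so the purely Banach-space data $(X,Y,C,(S_t)_{t\geq 0},M,\omega)$ satisfy the standing assumptions of Theorem \ref{thm:spectral+diss-obs}. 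The parameters $\lambda^*,(P_\lambda)_{\lambda>\lambda^*},r,d_0,d_1,d_2,d_3,\gamma_1,\gamma_2,\gamma_3,T$ are carried over verbatim, and the two estimates \eqref{eq:UP}, \eqref{eq:DISS} are word-for-word \eqref{eq:UP_BS}, \eqref{eq:DISS_BS}. Therefore Theorem \ref{thm:spectral+diss-obs} delivers the asserted constant $C_{\mathrm{obs}} \geq 0$ and the final state observability estimate for all $x \in X$, completing the proof.

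I do not expect any real obstacle: the content has been isolated into Lemma \ref{lem:measurability}, and the proof of that lemma is itself short, resting on the fact that on a Saks space $Y$ the dual $(Y,\topo_Y)'$ is norming, so $\norm{\cdot}_Y$ is a supremum of continuous functionals, making $t \mapsto \norm{CS_t x}_Y$ lower semi-continuous and hence Borel measurable. The only thing to be careful about is to state explicitly that the hypotheses of both Lemma \ref{lem:measurability} and Theorem \ref{thm:spectral+diss-obs} are satisfied, rather than leaving the reduction implicit; this is a deduction-by-citation proof and its value lies in making the translation transparent.
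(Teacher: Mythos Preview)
Your proposal is correct and matches the paper's approach exactly: the paper itself does not give a separate proof but simply notes that, in view of Lemma \ref{lem:measurability}, Theorem \ref{thm:spectral+diss-obs} applies. Your write-up spells out the verification in more detail (exponential boundedness from the definition of bi-continuity, identity of \eqref{eq:UP}, \eqref{eq:DISS} with \eqref{eq:UP_BS}, \eqref{eq:DISS_BS}), but the logical content is the same reduction via Lemma \ref{lem:measurability}.
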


\begin{remark}
    The statements in Theorem \ref{thm:spectral+diss-obs} and Theorem \ref{thm:spectral+diss-obs_bi-continuous} can be generalised in the sense that one can obtain an estimate with an $L_r$-norm of $t\mapsto \norm{CS_tx}_Y$ on a measurable subset $E\subseteq [0,T]$ with positive Lebesgue measure; cf.\ e.g.\ \cite{BGST2023}. However, in this case the constant $C_{\mathrm{obs}}$ (cf.\ Remark \ref{rem:constant}) is not explicit anymore.
\end{remark}

\section{Two Examples of Bi-Continuous Semigroups}
\label{sec:application}

In this section we consider final state observability for two important examples: the Gau{\ss}--Weierstra{\ss} semigroup on $C_b(\R^d)$ and the Ornstein--Uhlenbeck semigroup on $C_b(\R^d)$. We begin with the study of restriction operators on $C_b(\R^d)$, restricting functions to suitable subsets, and relate this to an abstract uncertainty principle.

\subsection{Restriction Operators on \texorpdfstring{$C_b(\R^d)$}{Cb(Rd)} and the Uncertainty Principle.}
\label{subsec:Restriction}

Let $\Omega\subseteq \R^d$ be non-empty, $C\colon C_b(\R^d)\to C_b(\Omega)$ the restriction operator defined by $Cf\coloneqq f|_\Omega$ for $f\in C_b(\R^d)$. Then $C\in \calL(C_b(\R^d);C_b(\Omega))$. Let $\topo_{\co}$ be the compact-open topology on $C_b(\R^d)$ (as well as on $C_b(\Omega)$).
Then $(C_b(\Omega),\norm{\cdot}_\infty,\topo_{\co})$ is a Saks space which is sequentially complete if $\Omega$ is locally compact (in particular if $\Omega=\R^d$).

\begin{lemma}
\label{lem:restriction_continuous}
    $C\from (C_b(\R^d),\topo_{\co})\to (C_b(\Omega),\topo_{\co})$ is continuous.
\end{lemma}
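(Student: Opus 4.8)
The claim is that the restriction operator $C\colon (C_b(\R^d),\topo_{\co})\to (C_b(\Omega),\topo_{\co})$ is continuous, where $\topo_{\co}$ denotes the compact-open topology on each side. The plan is to argue directly from the defining seminorms of the compact-open topology. Recall that on $C_b(\Omega)$ the compact-open topology is generated by the seminorms $p_K(g)\coloneqq\sup_{x\in K}\abs{g(x)}$ as $K$ ranges over the compact subsets of $\Omega$, and similarly on $C_b(\R^d)$ by the seminorms $q_L(f)\coloneqq\sup_{x\in L}\abs{f(x)}$ for $L\subseteq\R^d$ compact. Since $C$ is linear, it suffices to check that for each compact $K\subseteq\Omega$ the seminorm $p_K\circ C$ is dominated by (a constant times) one of the seminorms $q_L$ on $C_b(\R^d)$.

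The key observation is that a compact subset $K$ of $\Omega$ (with the subspace topology inherited from $\R^d$) is also a compact subset of $\R^d$, because the inclusion $\Omega\hookrightarrow\R^d$ is continuous and compactness is preserved under continuous images, or more simply because compactness is an intrinsic property of the subspace. Hence, taking $L\coloneqq K$, we have for every $f\in C_b(\R^d)$
\[
p_K(Cf)=\sup_{x\in K}\abs{(f|_\Omega)(x)}=\sup_{x\in K}\abs{f(x)}=q_K(f).
\]
This shows $p_K\circ C=q_K$, so in particular $p_K\circ C$ is continuous on $(C_b(\R^d),\topo_{\co})$. Since this holds for every compact $K\subseteq\Omega$, the map $C$ is continuous by the standard characterisation of continuity of linear maps into a locally convex space generated by a family of seminorms.

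There is essentially no obstacle here; the only point requiring a moment's care is the identification of compact subsets of $\Omega$ with compact subsets of $\R^d$, which is immediate since $\Omega$ carries the subspace topology. One could alternatively phrase the whole argument via nets or via the universal property of the compact-open topology, but the seminorm computation above is the most economical. Note that this lemma, together with Lemma \ref{lem:measurability}, is exactly what is needed to bring the restriction operator on $C_b(\R^d)$ into the framework of Theorem \ref{thm:spectral+diss-obs_bi-continuous}.
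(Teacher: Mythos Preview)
Your proof is correct and follows essentially the same approach as the paper: both argue directly via the generating seminorms of the compact-open topology, observe that a compact subset $K\subseteq\Omega$ is also compact in $\R^d$, and verify the identity $p_K(Cf)=q_K(f)$.
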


\begin{proof}
The map $C$ is clearly linear. Due to \cite[Theorem 46.8]{munkres2000} the compact-open topology $\topo_{\co}$ 
on $C_b(Z)$ for a Hausdorff topological space $Z$ is given by the system of seminorms 
\[
p_{K}^{Z}(f)\coloneqq\sup_{x\in K}|f(x)|\quad (f\in C_b(Z))
\]
for compact $K\subseteq Z$. Let $K\subseteq\Omega$ be compact in the relative topology. 
Then $K$ is compact in $\R^{d}$ as well and 
\[
p_{K}^{\Omega}(Cf)=\sup_{x\in K}|f|_\Omega(x)|=\sup_{x\in K}|f(x)|=p_{K}^{\R^{d}}(f)\quad (f\in C_b(\R^{d})),
\]
which means that $C$ is continuous.
\end{proof}

We now use the operator $C$ to provide an uncertainty principle based on the well-known Logvinenko--Sereda theorem.
Let $\eta\in C_c[0,\infty)$, $\1_{[0,1/2]} \leq \eta \leq \1_{[0,1]}$.
For $\lambda>0$ let $\chi_\lambda\from \R^d\to\R$, $\chi_\lambda\coloneqq \eta(\abs{\cdot}/\lambda)$, and $P_\lambda\in \calL(C_b(\R^d))$ be defined by $P_\lambda f\coloneqq (\calF^{-1}\chi_\lambda) \ast f$, where $\calF$ denotes the Fourier transformation. By Young's inequality and scaling properties of the Fourier transformation, we have
\[\norm{P_\lambda} \leq \norm{\calF^{-1}\chi_\lambda}_{L_1(\R^d)} = \norm{\calF^{-1}\chi_1}_{L_1(\R^d)} \quad(\lambda>0).\]
Note that for all $f\in C_b(\R^d)$ and $\lambda>0$ we have $\calF P_\lambda f = \chi_\lambda \calF f$ and therefore $\spt \calF P_\lambda f \subseteq B[0,\lambda] \subseteq [-\lambda,\lambda]^d$, where $B[0,\lambda]\coloneqq\{x\in\R^d \mid \abs{x}\leq \lambda\}$ is the closed ball around $0$ with radius $\lambda$.

\begin{definition}
    Let $\Omega\subseteq\R^d$. Then $\Omega$ is called \emph{thick} if $\Omega$ is measurable and there exist $L\in (0,\infty)^d$ and $\rho\in (0,1]$ such that
    \[\lambda^d(\Omega\cap (x+(0,L))) \geq \rho \lambda^d((0,L)) \quad(x\in\R^d),\]
    where $\lambda^d$ denotes the $d$-dimensional Lebesgue measure, and $(0,L) \coloneqq \prod_{j=1}^d (0,L_j)$ is the hypercube with sidelengths contained in $L$.
\end{definition}
Thus, a measurable set $\Omega\subseteq\R^d$ is thick (with parameters $L$ and $\rho$) provided the portion of $\Omega$ in every hypercube with sidelengths contained in $L$ is at least $\rho$.

By the Logvinenko--Sereda theorem (see \cite{LogvinenkoS-74,Kovrijkine-01}), if $\Omega\subseteq \R^d$ is a thick set, then there exist $d_0,d_1>0$ such that
\begin{equation}
\label{eq:UP_LS}
\norm{P_\lambda f}_{C_b(\R^d)} \leq d_0\e^{d_1 \lambda} \norm{CP_\lambda f}_{C_b(\Omega)} \quad(\lambda>0, f\in C_b(\R^d)).
\tag{LS}
\end{equation}
Thus, \eqref{eq:UP_LS} yields an estimate of the form \eqref{eq:UP}.

\subsection{The Gau{\ss}--Weierstra{\ss} Semigroup on \texorpdfstring{$C_b(\R^d)$}{Cb(Rd)}.}
\label{subsec:Gauss-Weierstrass}

Let $k\from (0,\infty)\times\R^d\to\R$ be given by
\[k_t(x)\coloneqq k(t,x) \coloneqq \frac{1}{(4\pi t)^{d/2}} \e^{-\abs{x}^2/(4t)} \quad(t>0, x\in \R^d),\]
the so-called \emph{Gau{\ss}--Weierstra{\ss} kernel}.
For $t\geq 0$ we define $S_t \in \calL(C_b(\R^d))$ by 
\[S_t f \coloneqq \begin{cases} f & t=0,\\ k_t * f & t>0.
                  \end{cases}\]
Note that by Young's inequality and the fact that $\norm{k_t}_{L_1(\R^d)} = 1$ for all $t>0$ we have $\norm{S_tf}_{C_b(\R^d)} \leq \norm{f}_{C_b(\R^d)}$ for $f\in C_b(\R^d)$ and $t\geq 0$. It is easy to see that $(S_t)_{t\geq 0}$ is a semigroup, which is called the \emph{Gau{\ss}--Weierstra{\ss} semigroup}.
Let $\topo_{\co}$ be the compact-open topology on $C_b(\R^d)$. Then $(S_t)_{t\geq0}$ is locally $\topo_{\co}$-bi-continuous; 
see e.g.\ \cite[Examples 6 (a)]{kuehnemund2003}.

For $\lambda>0$ let $P_\lambda \in \calL(C_b(\R^d))$ be defined as in \prettyref{subsec:Restriction}.
By \cite[Proposition 3.2]{BGST2022}, there exist $d_2\geq 1$ and $d_3>0$ such that
\begin{equation}
\label{eq:DISS_GW}
\norm{(I-P_\lambda) S_tf}_{C_b(\R^d)} \le d_2 \euler^{-d_3 \lambda^{2} t} \norm{f}_{C_b(\R^d)} \quad (\lambda>0, t\geq 0, f\in C_b(\R^d)),
\tag{DISS(GW)}
\end{equation}
i.e.\ a dissipation estimate \eqref{eq:DISS} is fulfilled.

Thus, if $\Omega\subseteq\R^d$ is thick, then \eqref{eq:UP_LS} and \eqref{eq:DISS_GW} provide the estimates \eqref{eq:UP} and \eqref{eq:DISS} and so
\prettyref{thm:spectral+diss-obs_bi-continuous} yields a final state observability estimate for the Gau{\ss}--Weierstra{\ss} semigroup on $C_b(\R^d)$.

\subsection{The Ornstein--Uhlenbeck Semigroup on \texorpdfstring{$C_b(\R^d)$}{Cb(Rd)}.}
\label{subsec:Ornstein--Uhlenbeck}

Let $M\from (0,\infty)\times  \R^d\times\R^d\to \R$ be given by
\[M_t(x,y)\coloneqq M(t,x,y)\coloneqq \frac{1}{\pi^{d/2} (1-\e^{-2t})^{d/2}} \e^{-\abs{y-\e^{-t}x}^2/(1-\e^{-2t})} \quad(t>0, x,y\in\R^d),\]
the so-called \emph{Mehler kernel}.
For $t\geq 0$ we define $S_t \in \calL(C_b(\R^d))$ by 
\[S_t f \coloneqq \begin{cases} f & t=0,\\ \int_{\R^d} M_t(\cdot,y)f(y)\,\drm y & t>0.
                  \end{cases}\]
Since $\int_{\R^d} M_t(\cdot,y)\,\drm y = 1$ for all $t>0$, we have $\norm{S_tf}_{C_b(\R^d)} \leq \norm{f}_{C_b(\R^d)}$ for $f\in C_b(\R^d)$ and $t\geq 0$. It is not difficult to see that $(S_t)_{t\geq 0}$ is a semigroup, which is called the \emph{Ornstein--Uhlenbeck semigroup}. Let $\topo_{\co}$ be the compact-open topology on $C_b(\R^d)$. 
Then $(S_t)_{t\geq0}$ is locally $\topo_{\co}$-bi-continuous on $C_b(\R^d)$; see e.g.\ \cite[Proposition 3.10]{kuehnemund2001}.

Define $k\from (0,1)\times \R^d\to\R$ by
\[k_s(x)\coloneqq \frac{1}{\pi^{d/2} (1-s^2)^{d/2}} \e^{-\abs{x}^2/(1-s^2)}.\]
Let $s\in (0,1)$. Then we obtain
\[M_{\ln\frac{1}{s}} (\tfrac{1}{s}x,y) = \frac{1}{\pi^{d/2} (1-s^2)^{d/2}} \e^{-\abs{y-x}^2/(1-s^2)} = k_s(x-y) \quad(x,y\in\R^d).\]
Hence, 
\[\bigl(S_{\ln\frac{1}{s}} f\bigr)(\tfrac{1}{s}\,\cdot\,) = k_s \ast f \quad(f\in C_b(\R^d)).\]

For $\lambda>0$ let $P_\lambda\in \calL(C_b(\R^d))$ as in Subsection \ref{subsec:Gauss-Weierstrass}.
Since
\[\calF k_s (\xi) = \e^{-(1-s^2)\abs{\xi}^2/4}\eqqcolon h_s(\xi) \quad(\xi\in\R^d),\]
 for $\lambda>0$ and $s\in (0,1)$ we conclude that
\[\bigl((I-P_\lambda)S_{\ln\frac{1}{s}}f\bigr)(\tfrac{1}{s}\,\cdot\,) = (I-P_{\lambda/s}) \bigl(S_{\ln\frac{1}{s}}f(\tfrac{1}{s}\,\cdot\,)\bigr) = \calF^{-1}\bigl((1-\chi_{\lambda/s}) h_s\bigr) \ast f \quad(f\in C_b(\R^d)).\]

\begin{lemma}
\label{lem:DISS_Ornstein-Uhlenbeck}
    There exist $d_2\geq 1$ and $d_3>0$ such that for $\lambda>0$ and $s\in (0,1)$ we have
    \[\norm{\calF^{-1}\bigl((1-\chi_\lambda) h_s\bigr)}_{L_1(\R^d)} \leq d_2 \e^{-d_3 \lambda^2 (1-s^2)}.\]
\end{lemma}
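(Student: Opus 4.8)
The plan is to estimate the $L_1$-norm of $\calF^{-1}\bigl((1-\chi_\lambda) h_s\bigr)$ by splitting the frequency variable $\xi$ into the region $|\xi|\leq \lambda$, where $1-\chi_\lambda$ may be nonzero only on $\lambda/2\leq |\xi|\leq \lambda$ (since $\chi_\lambda = \eta(|\cdot|/\lambda)$ and $\eta \geq \1_{[0,1/2]}$), and the region $|\xi| > \lambda$, where $\chi_\lambda = 0$ so the symbol equals $h_s$. On the first region the Gaussian factor $h_s(\xi) = \e^{-(1-s^2)|\xi|^2/4}$ is bounded by $\e^{-(1-s^2)\lambda^2/16}$, and on the second by $\e^{-(1-s^2)\lambda^2/4}$; in both cases we extract a decaying exponential of the form $\e^{-c\lambda^2(1-s^2)}$. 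The remaining task is to control the $L_1$-norm of the inverse Fourier transform of the truncated symbol uniformly in $\lambda$ and $s$.

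The key technical device is a Bernstein/Sobolev-type bound: the $L_1$-norm of $\calF^{-1} g$ for a function $g$ with $\spt g \subseteq B[0,\lambda]$ can be estimated by $C\lambda^{d/2}\|\calF^{-1}g\|_{L_2} + (\text{higher-order terms})$, or more cleanly one writes, for a smooth cutoff, $\|\calF^{-1}g\|_{L_1} \leq C\bigl(\|\calF^{-1}g\|_{L_2}^{1-\theta}\|\,|x|^{k}\calF^{-1}g\|_{L_2}^{\theta}\bigr)$ for appropriate $k > d/2$ and $\theta$, via Cauchy–Schwarz after inserting the weight $(1+|x|^2)^{k/2}(1+|x|^2)^{-k/2}$. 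By Plancherel, $\|\,|x|^k \calF^{-1}g\|_{L_2}$ is comparable to $\|g\|_{H^k}$, i.e.\ to $L_2$-norms of derivatives of order up to $k$ of the symbol $(1-\chi_\lambda)h_s$. Differentiating $(1-\chi_\lambda)h_s$: each derivative of $1-\chi_\lambda = 1 - \eta(|\cdot|/\lambda)$ produces a factor $\lambda^{-1}$ times a bounded function supported where $\lambda/2 \leq |\xi| \leq \lambda$, while each derivative of the Gaussian $h_s$ brings down a factor $(1-s^2)|\xi| \lesssim (1-s^2)\lambda \leq \lambda$ on the relevant region (using $1-s^2 \leq 1$), times $h_s$ which carries the exponential decay. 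Carefully collecting these, one sees that every term has the decaying Gaussian factor $\e^{-c(1-s^2)\lambda^2}$ in front, and the polynomial-in-$\lambda$ prefactors (coming from the volume of the shell $\{\lambda/2\leq|\xi|\leq\lambda\}$, which is $\sim \lambda^d$, and from the $\lambda$-powers just described) are all dominated by adjusting the constant $d_3$ downward — i.e.\ one absorbs $\lambda^{N}\e^{-c(1-s^2)\lambda^2}$ into $d_2' \e^{-d_3(1-s^2)\lambda^2}$, which is legitimate uniformly in $s$ as long as one does not insist on the optimal constant, because $\sup_{\lambda>0, 0<s<1} \lambda^{N}\e^{-(c-d_3)(1-s^2)\lambda^2}$ need not be finite — here the genuinely subtle point appears, so one must instead keep the full factor $(1-s^2)$ paired with $\lambda^2$ in the exponent and only absorb powers of $(1-s^2)^{1/2}\lambda$, i.e.\ write $\lambda^N \e^{-c(1-s^2)\lambda^2} = \bigl((1-s^2)^{1/2}\lambda\bigr)^N (1-s^2)^{-N/2}\e^{-c(1-s^2)\lambda^2}$ and note $(1-s^2)^{-N/2}$ is \emph{not} bounded as $s\to 1$.

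I expect this last bookkeeping to be the main obstacle: one must track the powers of $(1-s^2)$ and of $\lambda$ jointly and verify they always occur in the scale-invariant combination $(1-s^2)^{1/2}\lambda$ (equivalently, rescale $\xi \mapsto (1-s^2)^{-1/2}\xi$ at the outset, so that $h_s$ becomes the fixed Gaussian $\e^{-|\xi|^2/4}$ and the cutoff becomes $1-\chi_{\lambda(1-s^2)^{1/2}}$, reducing everything to a single-parameter estimate $\|\calF^{-1}((1-\chi_\mu)\e^{-|\cdot|^2/4})\|_{L_1} \leq d_2\e^{-d_3\mu^2}$ with $\mu = \lambda(1-s^2)^{1/2}$, after accounting for the $(1-s^2)^{d/2}$ Jacobian which is bounded). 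Thus the cleanest route is: first perform the substitution $\xi = (1-s^2)^{-1/2}\zeta$ to collapse the two parameters into $\mu$, then prove the one-parameter bound by the split-and-Bernstein argument above, where now all prefactors are genuine powers of $\mu$ and are harmlessly absorbed into $d_3$ via $\sup_{\mu>0}\mu^N\e^{-(c-d_3)\mu^2} < \infty$.
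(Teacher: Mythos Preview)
Your proposal converges on essentially the paper's approach: rescale by $\xi\mapsto (1-s^2)^{-1/2}\xi$ to reduce to the single parameter $\mu=\lambda\sqrt{1-s^2}$, and then control $\|\calF^{-1}\sigma_\mu\|_{L_1}$ for $\sigma_\mu=(1-\chi_\mu)\e^{-|\cdot|^2/4}$ via derivative bounds on the symbol. The paper packages the last step as pointwise bounds $|x^\alpha\calF^{-1}\sigma_\mu(x)|\leq C\e^{-\mu^2/32}$ for $|\alpha|\leq d+1$ (using $\alpha=0$ near the origin and $\alpha=(d{+}1)e_j$ away from it), whereas you use the Sobolev route $\|\calF^{-1}g\|_{L_1}\lesssim\|g\|_{H^k}$; these are the same idea in different clothes. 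One small correction: there is no Jacobian to track, since the $L_1$-norm of an inverse Fourier transform is scale invariant.

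There is, however, a genuine gap in your final paragraph. You assert that after rescaling ``all prefactors are genuine powers of $\mu$'' and can be absorbed via $\sup_{\mu>0}\mu^N\e^{-(c-d_3)\mu^2}<\infty$. But derivatives of $1-\chi_\mu$ produce \emph{negative} powers $\mu^{-j}$ (supported on the shell $\mu/2\leq|\xi|\leq\mu$), so your $H^k$-norms blow up like $\mu^{d/2-k}$ as $\mu\to 0$ once $k>d/2$. The paper meets this issue head-on by splitting into the cases $\mu\geq 1$ and $\mu<1$: for $\mu\geq 1$ the derivative argument goes through (negative powers of $\mu$ are harmless there), while for $\mu<1$ one simply bounds
\[
\|\calF^{-1}((1-\chi_\lambda)h_s)\|_{L_1}\leq \|\calF^{-1}h_s\|_{L_1}+\|\calF^{-1}\chi_\lambda\|_{L_1}\|\calF^{-1}h_s\|_{L_1}
= 1+\|\calF^{-1}\chi_1\|_{L_1}
\]
via Young's inequality, and observes that $\e^{-d_3\mu^2}\geq \e^{-d_3}$ on this range. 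You should add this case distinction; once you do, your argument is complete and matches the paper's.
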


\begin{proof}
    Let $\lambda>0$, $s\in (0,1)$, and define 
    \[\sigma_{s,\lambda}\coloneqq (1-\chi_{\sqrt{1-s^2} \lambda})h_s\Bigl(\tfrac{1}{\sqrt{1-s^2}}\,\cdot\,\Bigr) = (1-\chi_{\sqrt{1-s^2} \lambda}) \e^{-\abs{\cdot}^2/4}.\]
    Then by a linear substitution we obtain
    \[\norm{\calF^{-1}\bigl((1-\chi_\lambda) h_s\bigr)}_{L_1(\R^d)} = \norm{\calF^{-1}\sigma_{s,\lambda}}_{L_1(\R^d)}.\]
    
    Let $\alpha\in\N_0^d$, $\abs{\alpha}\leq d+1$. Then
    \[\abs{\partial^\alpha \sigma_{s,\lambda}} \leq \1_{\{\abs{\cdot}\geq \sqrt{1-s^2}\lambda/2\}} \abs{\partial^\alpha \e^{-\abs{\cdot}^2/4}} + \sum_{\beta\in \N_0^d, \beta<\alpha} \binom{\alpha}{\beta} \abs{\partial^{\alpha-\beta} (1-\chi_{\sqrt{1-s^2}\lambda})} \abs{\partial^\beta \e^{-\abs{\cdot}^2/4}}.\]

    There exists $K\geq 0$ such that for all $\beta\in\N_0^d$ with $\beta\leq \alpha$ and all $\xi\in\R^d$ we have
    \[\abs{\partial^\beta \e^{-\abs{\cdot}^2/4}(\xi)} \leq K (1+\abs{\xi})^{\abs{\beta}} \e^{-\abs{\xi}^2/4}.\]
    Let
    \[C_1\coloneqq  \sup_{\beta\in\N_0^d, \beta\leq \alpha, \xi\in\R^d} K (1+ \abs{\xi})^{\abs{\beta}} \e^{-\abs{\xi}^2/16}.\]
    Then, for $\beta\leq \alpha$ and $\abs{\xi}\geq \sqrt{1-s^2}\lambda/2$ we have
    \[\abs{\partial^\beta \e^{-\abs{\cdot}^2/4}(\xi)} \leq C_1 \e^{-\abs{\xi}^2/16} \e^{-(1-s^2)\lambda^2/32}.\]
    Further, for $\beta<\alpha$ and $\xi\in\R^d$ we have
    \begin{align*}
        & \abs{\partial^{\alpha-\beta} (1-\chi_{\sqrt{1-s^2}\lambda})(\xi)} \\
        & \leq (\sqrt{1-s^2}\lambda)^{-\abs{\alpha-\beta}} \abs{\partial^{\alpha-\beta} \chi_1\Bigl(\tfrac{\xi}{\sqrt{1-s^2}\lambda}\Bigr)}  \1_{\{\sqrt{1-s^2}\lambda/2 \leq \abs{\cdot}\leq \sqrt{1-s^2}\lambda\}}(\xi) \\
        & \leq C_2 (\sqrt{1-s^2}\lambda)^{-\abs{\alpha-\beta}} \1_{\{\sqrt{1-s^2}\lambda/2 \leq \abs{\cdot}\leq \sqrt{1-s^2}\lambda\}}(\xi)
    \end{align*}
    where $C_2\coloneqq \max_{\beta<\alpha} \norm{\partial^{\alpha-\beta} \chi_1}_{C_b(\R^d)}$.
    Hence, there exists $C\geq0$ (which is independent of $s$ and $\lambda$) such that if $\sqrt{1-s^2}\lambda\geq 1$, then for all $\xi\in\R^d$ we have
    \[\abs{\partial^\alpha \sigma_{s,\lambda}} \leq C \e^{-\abs{\xi}^2/16} \e^{-(1-s^2)\lambda^2/32}.\]
    
    Therefore, increasing $C$, for all $x\in\R^d$ we obtain
    \begin{equation}\label{eq:estimate_Fourier_times_pol}
    \abs{x^\alpha \calF^{-1}\sigma_{s,\lambda}(x)}  = \abs{\calF^{-1}(\partial^\alpha \sigma_{s,\lambda})(x)} \leq C\e^{-(1-s^2)\lambda^2/32}.
    \end{equation}
    By choosing $j\in \{1,\ldots,d\}$ and $\alpha \coloneqq (d+1)e_j$ for the $j$-th canonical unit vector $e_j$, we observe
    $\norm{x}_\infty^{d+1} \abs{\calF^{-1}\sigma_{s,\lambda}(x)}  \leq  C\e^{-(1-s^2)\lambda^2/32}$ and hence
    \begin{equation}\label{eq:estimate_Fourier_outside_zero}
    \abs{\calF^{-1}\sigma_{s,\lambda}(x)}  \leq  C\e^{-(1-s^2)\lambda^2/32} \abs{x}^{-d-1}
    \end{equation}
    for all $x\in\R^d\setminus\{0\}$, where we increased $C$.
    
    Therefore, if $\sqrt{1-s^2}\lambda\geq1$, we can conclude by \eqref{eq:estimate_Fourier_times_pol} 
    for $\alpha=0$ and \eqref{eq:estimate_Fourier_outside_zero} that
    \begin{align*}
        \norm{\calF^{-1}\bigl((1-\chi_\lambda) h_s\bigr)}_{L_1(\R^d)} & = \norm{\calF^{-1}\sigma_{s,\lambda}}_{L_1(\R^d)} \\
        & \leq C\e^{-(1-s^2)\lambda^2/32} \Biggl(\;\int_{B[0,1]} 1\,\drm x + \int_{\R^{d}\setminus B[0,1]} \abs{x}^{-d-1}\,\drm x\Biggr) \\
        & \leq C\e^{-(1-s^2)\lambda^2/32},
    \end{align*}
    where we increased $C$ again.
    
    It remains to prove the estimate for the case $\sqrt{1-s^2}\lambda<1$. Note that
    \begin{align*}
        \norm{\calF^{-1} (\chi_\lambda h_s)}_{L_1(\R^d)} & = \norm{ \calF^{-1} \chi_\lambda \ast \calF^{-1} h_s}_{L_1(\R^d)} \\
        & \leq \norm{\calF^{-1} \chi_\lambda}_{L_1(\R^d)} \norm{k_s}_{L_1(\R^d)} = \norm{\calF^{-1} \chi_1}_{L_1(\R^d)},
    \end{align*}
    where the last equality follows form scaling properties of the Fourier transformation and the fact that 
    $k_s$ is normalised in $L_1(\R^d)$.
    
    Thus, for $\sqrt{1-s^2}\lambda<1$ we obtain
    \[\norm{\calF^{-1} (\chi_\lambda h_s)}_{L_1(\R^d)} \leq \norm{\calF^{-1} \chi_1}_{L_1(\R^d)} \e^{1/32} \e^{-(1-s^2)\lambda^2/32},\]
    which ends the proof.    
\end{proof}

In view of Lemma \ref{lem:DISS_Ornstein-Uhlenbeck}, we obtain the dissipation estimate \eqref{eq:DISS} as follows. Note that for $t\geq 0$ we have $\e^{2t}-1\geq 2t$.
Let $t>0$ and $\lambda>0$, and set $s\coloneqq \e^{-t}\in (0,1)$. Then, for $f\in C_b(\R^d)$, Young's inequality and Lemma \ref{lem:DISS_Ornstein-Uhlenbeck} yield
\begin{align}
\label{eq:DISS_OU}
    \norm{(I-P_\lambda)S_{t}f}_{C_b(\R^d)} & = \norm{\bigl((I-P_\lambda)S_{\ln\frac{1}{s}}f\bigr)(\tfrac{1}{s}\,\cdot\,)}_{C_b(\R^d)} \nonumber \\
    & \leq \norm{\calF^{-1}\bigl((1-\chi_{\lambda/s})h_s\bigr)}_{L_1(\R^d)} \norm{f}_{C_b(\R^d)} \nonumber \\
    & \leq d_2 \e^{-d_3 \lambda^2 s^{-2} (1-s^2)}\norm{f}_{C_b(\R^d)} = d_2 \e^{-d_3 \lambda^2 (\e^{2t}-1)}\norm{f}_{C_b(\R^d)} \nonumber\\
    & \leq d_2 \e^{-2d_3 \lambda^2 t}\norm{f}_{C_b(\R^d)}.
    \tag{DISS(OU)}
\end{align}
Thus, if $\Omega\subseteq \R^d$ is thick and $C\from C_b(\R^d)\to C_b(\Omega)$ is the restriction map as in \prettyref{subsec:Gauss-Weierstrass},  then \eqref{eq:UP_LS} and \eqref{eq:DISS_OU} provide the estimates \eqref{eq:UP} and \eqref{eq:DISS} and so \prettyref{thm:spectral+diss-obs_bi-continuous} yields a final state observability estimate for the Ornstein--Uhlenbeck semigroup on $C_b(\R^d)$.

\section{Cost-Uniform Approximate Null-Controllability and Duality}
\label{sec:Null-controllability}

In this section we want to show that cost-uniform approximate null-controllability is equivalent to final state
observability of the dual system, which is known in the setting of norm-strongly continuous semigroups; 
see \cite{Douglas-66,Carja-88,Vieru-05,YuLC-06}.
In the bi-continuous setting this needs a bit of preparation so that we can formulate the corresponding 
definitions. Since we work in Hausdorff locally convex spaces, the choice of the ``correct'' integral may be delicate. We therefore provide the duality for continuous control functions, and thus relate it to a final state observability estimate of the dual system w.r.t.\ a space of vector measures.

\begin{definition}
Let $(X,\norm{\cdot}_X,\topo_X)$ be a Saks space and $T>0$. We set 
\[
  C_{\topo,b}([0,T];X)
\coloneqq \{f\in C([0,T];(X,\topo_X))\ |\ \|f\|_{\infty}\coloneqq \sup_{t\in[0,T]}\norm{f(t)}_X<\infty\}
\]
where $C([0,T];(X,\topo_X))$ is the space of continuous functions from $[0,T]$ to $(X,\topo_X)$.
\end{definition}

\begin{remark}\label{rem:space_bounded_cont}
Let $(X,\norm{\cdot}_X,\topo_X)$ be a Saks space and $T>0$. 
\begin{enumerate}
\item Since the mixed topology $\gamma_X$ coincides with 
$\topo_X$ on $\norm{\cdot}_X$-bounded sets by \cite[Lemma 2.2.1]{wiweger1961}, and a subset of $X$ is $\norm{\cdot}_X$-bounded 
if and only if it is $\gamma_X$-bounded by \cite[Corollary 2.4.1]{wiweger1961} we have 
\[
 C_{\topo,b}([0,T];X)=C_{b}([0,T];(X,\gamma_X))=C([0,T];(X,\gamma_X)).
\] 
We define two topologies on this space. First, the one given by the norm 
\[
\norm{f}_{\infty}\coloneqq \sup_{t\in [0,T]}\norm{f(t)}_X \qquad (f\in C_{\topo,b}([0,T];X)).
\]
Second, the Hausdorff locally convex topology $\gamma_{\infty}$ induced by the directed system of seminorms given by 
\[
p_{\gamma_\infty}(f)\coloneqq \sup_{t\in [0,T]}p_{\gamma_X}(f(t)) \qquad (f\in C_{\topo,b}([0,T];X))
\]
for $p_{\gamma_X}\in\mathcal{P}_{\gamma_X}$ where $\mathcal{P}_{\gamma_X}$ is a directed system of seminorms that 
induces the mixed topology $\gamma_X$. Clearly, $\gamma_{\infty}$ is coarser than the $\norm{\cdot}_{\infty}$-topology. 
Further, $(C_{\topo,b}([0,T];X),\norm{\cdot}_{\infty})$ is a Banach space.
\item We note that a subset $B\subseteq C_{\topo,b}([0,T];X)$ is $\norm{\cdot}_{\infty}$-bounded if and only if 
it is $\gamma_{\infty}$-bounded since a subset of $X$ is $\norm{\cdot}_X$-bounded if and only if it is 
$\gamma_X$-bounded by \cite[2.4.1 Corollary]{wiweger1961}. So $((C_{\topo,b}([0,T];X),\gamma_{\infty})',\topo_b)$ 
is a topological subspace of $C_{\topo,b}([0,T];X)'=((C_{\topo,b}([0,T];X),\norm{\cdot}_{\infty})',\norm{\cdot}_{C_{\topo,b}([0,T];X)'})$ 
where $\topo_{b}$ denotes the topology of uniform convergence on $\gamma_{\infty}$-bounded sets. 
In the following we use the notation 
$\norm{y'}_{(C_{\topo,b}([0,T];X),\gamma_{\infty})'}\coloneqq \norm{y'}_{C_{\topo,b}([0,T];X)'}$ for all 
$y'\in (C_{\topo,b}([0,T];X),\gamma_{\infty})'$.
\end{enumerate}
\end{remark}

\begin{proposition}\label{prop:pettis_integrand}
Let $(X,\norm{\cdot}_X,\topo_X)$ be a sequentially complete Saks space, $(S_{t})_{t\geq 0}$ a locally $\topo_{X}$-bi-continuous semigroup on $X$ and $T>0$.
Let $v\in C_{\topo,b}([0,T];X)$ and set 
\[
 f\colon [0,T]\to X,\; f(t)\coloneqq  S_{T-t}v(t).
\]
Then $f\in C_{\topo,b}([0,T];X)$.
\end{proposition}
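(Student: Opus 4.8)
The plan is to verify the two defining properties of $C_{\topo,b}([0,T];X)$ separately, namely that $f$ maps $[0,T]$ into $X$ with $\norm{f}_\infty < \infty$ and that $f$ is $\topo_X$-continuous. Boundedness is immediate from exponential boundedness of the semigroup: choosing $M\geq 1$ and $\omega\in\R$ with $\norm{S_s}_{\calL(X)}\leq M\euler^{\omega s}$ for all $s\geq 0$, we get $\norm{f(t)}_X = \norm{S_{T-t}v(t)}_X \leq M\euler^{\abs{\omega}T}\norm{v}_\infty<\infty$ for every $t\in[0,T]$, using $v\in C_{\topo,b}([0,T];X)$.

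For continuity I would fix $t_0\in[0,T]$ and, since $[0,T]$ is metrizable, check sequential continuity at $t_0$. Let $(t_n)_{n\in\N}$ in $[0,T]$ with $t_n\to t_0$, and split
\[
 f(t_n) - f(t_0) = S_{T-t_n}\bigl(v(t_n) - v(t_0)\bigr) + \bigl(S_{T-t_n} - S_{T-t_0}\bigr)v(t_0).
\]
The second summand tends to $0$ in $\topo_X$ because $(S_s)_{s\geq 0}$ is a $C_0$-semigroup for $\topo_X$, so $s\mapsto S_s v(t_0)$ is $\topo_X$-continuous on $[0,\infty)$, and $T-t_n\to T-t_0$. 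The first summand is the main point: the sequence $(v(t_n))_n$ is $\norm{\cdot}_X$-bounded by $\norm{v}_\infty$ and $\topo_X$-converges to $v(t_0)$ since $v$ is $\topo_X$-continuous, so local bi-equicontinuity of $(S_t)_{t\geq 0}$ applies. Concretely, for every seminorm $p$ from a directed generating system for $\topo_X$ we get $\sup_{s\in[0,T]} p\bigl(S_s(v(t_n) - v(t_0))\bigr)\to 0$ as $n\to\infty$, where we use that $[0,T]$ is compact so ``locally uniformly in $s$'' yields uniformity on $[0,T]$; in particular $p\bigl(S_{T-t_n}(v(t_n) - v(t_0))\bigr)\to 0$. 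Combining the two summands gives $p(f(t_n)-f(t_0))\to 0$ for each such $p$, hence $f(t_n)\to f(t_0)$ in $\topo_X$. As $t_0$ was arbitrary, $f\in C([0,T];(X,\topo_X))$, and together with the bound above, $f\in C_{\topo,b}([0,T];X)$.

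I do not expect a genuine obstacle here. The only subtlety is handling the ``moving'' semigroup time in $S_{T-t_n}(v(t_n)-v(t_0))$, where both the argument and the time parameter vary; this is precisely what local bi-equicontinuity is designed to control, and the compactness of $[0,T]$ makes the ``locally uniformly'' in the definition sufficient.
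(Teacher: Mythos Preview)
Your proof is correct and follows essentially the same route as the paper: the same splitting $f(t_n)-f(t_0)=S_{T-t_n}(v(t_n)-v(t_0))+(S_{T-t_n}-S_{T-t_0})v(t_0)$, handling the first term via local bi-equicontinuity (bounding by $\sup_{s\in[0,T]}p(S_s(v(t_n)-v(t_0)))$) and the second via $\topo_X$-strong continuity, together with the same exponential boundedness argument for $\norm{f}_\infty$. Your explicit remarks on metrizability of $[0,T]$ and on why ``locally uniformly'' suffices on the compact interval are helpful clarifications that the paper leaves implicit.
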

\begin{proof}
We denote by $\mathcal{P}_{\topo_X}$ a system of directed seminorms that generates the topology $\topo_X$ on $X$.
Let $(t_{n})_{n\in\N}$ be a sequence in $[0,T]$ that converges to $t\in[0,T]$
and set $x_{n}:=v(t_{n})-v(t)$ for $n\in\N$. The sequence $(x_{n})_{n\in\N}$ is $\norm{\cdot}_X$-bounded and 
$\topo_{X}\text{-}\lim\limits_{n\to\infty} x_n = 0$ due to our assumptions on $v$.
We have for $p\in\mathcal{P}_{\topo_X}$ that
\begin{align*}
  p(f(t_{n})-f(t))
&=p(S_{T-t_{n}}v(t_{n})-S_{T-t}v(t))\\
&\leq p(S_{T-t_{n}}v(t_{n})-S_{{T}-t_{n}}v(t))+p(S_{T-t_{n}}v(t)-S_{T-t}v(t))\\
&\leq p(S_{T-t_{n}}x_{n})+p((S_{T-t_{n}}-S_{T-t})v(t))\\
&\leq \sup_{s\in[0,T]} p(S_{s}x_{n})+p((S_{T-t_{n}}-S_{T-t})v(t)).
\end{align*}
Combining our estimate above with the local bi-equicontinuity and $\topo_X$-strong continuity of 
the semigroup, we deduce that $(f(t_{n}))_{n\in\N}$ converges to $f(t)$ in $(X,\topo_X)$. 
Hence, $f\in C([0,T];(X,\topo_{X}))$. Furthermore, as the semigroup is exponentially bounded, 
there are $M\geq 1$ and $\omega\in\R$ such that for all $t\in[0,T]$
\begin{align*}
      \norm{f(t)}_{X}&=\norm{S_{T-t}v(t)}_{X}
 \leq \norm{S_{T-t}}_{\calL(X)}\norm{v(t)}_{X}
 \leq M\euler^{\omega (T-t)}\norm{v(t)}_{X}\\
&\leq M\euler^{|\omega| T}\norm{v(t)}_{X},
\end{align*}
which yields that $f$ is $\norm{\cdot}_{X}$-bounded on $[0,T]$ because $v([0,T])$ is $\norm{\cdot}_{X}$-bounded.
\end{proof}

\begin{proposition}\label{prop:pettis}
Let $(X,\norm{\cdot}_X,\topo_X)$ be a sequentially complete Saks space, 
$(S_t)_{t\geq 0}$ a locally $\topo_X$-bi-continuous semigroup on $X$, $(U,\norm{\cdot}_U,\topo_U)$ a Saks space, 
and $B\in\calL(U;X)$ such that $B$ is also 
sequentially $\topo_U$-$\topo_X$-continuous on $\norm{\cdot}_{U}$-bounded sets.
Let $T>0$, $u\in C_{\topo,b}([0,T];U)$ and set $f\colon [0,T]\to X$, $f(t)\coloneqq  S_{T-t}Bu(t)$.
Then $f$ is $\topo_X$-Pettis integrable and $\gamma_X$-Pettis integrable and both integrals coincide.
\end{proposition}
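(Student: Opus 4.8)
The plan is to reduce the claim to \prettyref{prop:pettis_integrand} and then to realise the integral as a limit of Riemann sums, exploiting that on $\norm{\cdot}_X$-bounded sets the mixed topology $\gamma_X$ agrees with $\topo_X$, that $(X,\topo_X)'$ is norming, and that $(X,\gamma_X)$ is sequentially complete.

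First I would check that $Bu\in C_{\topo,b}([0,T];X)$. We have $\norm{Bu(t)}_X\leq\norm{B}_{\calL(U;X)}\norm{u}_\infty$ for all $t\in[0,T]$, so $Bu$ is $\norm{\cdot}_X$-bounded; and if $t_n\to t$ in $[0,T]$, then $(u(t_n))_{n\in\N}$ is $\norm{\cdot}_U$-bounded with $u(t_n)\to u(t)$ in $(U,\topo_U)$, hence $Bu(t_n)\to Bu(t)$ in $(X,\topo_X)$ by the sequential $\topo_U$-$\topo_X$-continuity of $B$ on $\norm{\cdot}_U$-bounded sets. Since $[0,T]$ is metrisable this gives $Bu\in C([0,T];(X,\topo_X))$, i.e.\ $Bu\in C_{\topo,b}([0,T];X)$. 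Applying \prettyref{prop:pettis_integrand} with $v:=Bu$ yields $f\in C_{\topo,b}([0,T];X)$, which by \prettyref{rem:space_bounded_cont}(a) means $f\in C([0,T];(X,\gamma_X))$. In particular $f$ is uniformly continuous as a map into $(X,\gamma_X)$ on the compact interval $[0,T]$, and $\norm{f(t)}_X\leq\norm{f}_\infty$ for all $t\in[0,T]$.

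Next I would build the integral via Riemann sums. Fix tagged partitions $(\pi_n)_{n\in\N}$ of $[0,T]$ with $\pi_{n+1}$ refining $\pi_n$ and $\mathrm{mesh}(\pi_n)\to 0$, and let $R_n\in X$ be the associated Riemann sums of $f$. Then $\norm{R_n}_X\leq T\norm{f}_\infty$, and for every continuous seminorm $p$ of $\gamma_X$ the uniform continuity of $f$ gives $p(R_n-R_m)\leq T\,\omega_p(\mathrm{mesh}(\pi_n))$ whenever $m\geq n$, where $\omega_p(\delta):=\sup\{p(f(s)-f(t))\colon s,t\in[0,T],\ |s-t|\leq\delta\}\to 0$ as $\delta\to 0\rlim$. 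Hence $(R_n)_{n\in\N}$ is a $\norm{\cdot}_X$-bounded $\gamma_X$-Cauchy sequence (in particular $\topo_X$-Cauchy), and sequential completeness of the Saks space provides $I\in X$ with $R_n\to I$ in $(X,\topo_X)$. By the norming property of $(X,\topo_X)'$ the set $\{R_n\colon n\in\N\}\cup\{I\}$ is still $\norm{\cdot}_X$-bounded, whence $R_n\to I$ also in $(X,\gamma_X)$.

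Finally I would identify $I$ as the Pettis integral. For any $x'\in(X,\gamma_X)'$ the scalar map $t\mapsto\dupa{x'}{f(t)}$ is continuous on $[0,T]$, hence bounded and Lebesgue integrable, and $\dupa{x'}{R_n}\to\int_0^T\dupa{x'}{f(t)}\,\drm t$ by convergence of the Riemann sums of this continuous function, while $\dupa{x'}{R_n}\to\dupa{x'}{I}$ since $x'$ is $\gamma_X$-continuous; thus $\dupa{x'}{I}=\int_0^T\dupa{x'}{f(t)}\,\drm t$ for all $x'\in(X,\gamma_X)'$, i.e.\ $f$ is $\gamma_X$-Pettis integrable with integral $I$. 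Restricting to $x'\in(X,\topo_X)'\subseteq(X,\gamma_X)'$ gives $\topo_X$-Pettis integrability with the same $I$; and since $(X,\topo_X)'$ is norming it separates the points of $X$, so the $\topo_X$-Pettis integral is unique and both integrals coincide. The only genuinely technical step is the construction of the integral: checking that the Riemann sums are Cauchy and, above all, that the Saks-space structure — sequential completeness of $(X,\gamma_X)$, coincidence of $\gamma_X$ and $\topo_X$ on $\norm{\cdot}_X$-bounded sets, and the norming dual — forces the limit to lie in $X$ with the limit again in a bounded set; the rest is the classical Riemann-sum reasoning once $f\in C([0,T];(X,\gamma_X))$ is secured via \prettyref{prop:pettis_integrand}.
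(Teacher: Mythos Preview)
Your argument is correct. Both you and the paper begin the same way: verify that $v\coloneqq Bu$ lies in $C_{\topo,b}([0,T];X)$ and then invoke \prettyref{prop:pettis_integrand} to obtain $f\in C_{\topo,b}([0,T];X)=C([0,T];(X,\gamma_X))$. From there the paper finishes in a single line by citing \cite[2.5~Proposition~(a)]{kruse_schwenninger2023}, which states precisely that every element of $C_{\topo,b}([0,T];X)$ is both $\topo_X$- and $\gamma_X$-Pettis integrable with coinciding integrals. You instead unpack that external reference by hand: you build the integral as a $\gamma_X$-limit of Riemann sums, using uniform continuity of $f$ into $(X,\gamma_X)$ on the compact interval, sequential completeness of $(X,\gamma_X)$, the coincidence of $\gamma_X$ and $\topo_X$ on norm-bounded sets, and the norming property of $(X,\topo_X)'$ to control $\norm{I}_X$ and to secure uniqueness. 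Your route is more elementary and self-contained, essentially reproving the cited proposition in this special case; the paper's is shorter but defers the analytic content to the companion paper. The underlying mechanism is the same.
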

\begin{proof}
The statement follows from \cite[2.5 Proposition (a)]{kruse_schwenninger2023} and \prettyref{prop:pettis_integrand} 
because the map $v\colon t\mapsto Bu(t)$ belongs to $C_{\topo,b}([0,T];X)$. 
\end{proof}

Now, we have everything at hand to formulate the definition of cost-uniform approximate null-controllability 
in the bi-continuous setting. Let $(X,\norm{\cdot}_X,\topo_X)$ be a sequentially complete Saks space, 
$(U,\norm{\cdot}_U,\topo_U)$ a Saks space, $(S_t)_{t\geq 0}$ a locally $\topo_X$-bi-continuous semigroup on $X$ with generator $(-A,D(A))$, 
and $B\in\calL(U;X)$ such that $B$ is also sequentially $\topo_U$-$\topo_X$-continuous on $\norm{\cdot}_{U}$-bounded sets, and $T>0$.
We consider the linear control system 
\begin{align}
\begin{split}\label{eq:controlsystem}
    \dot{x}(t) & = -Ax(t) + Bu(t) \quad(t>0),\\
    x(0) & = x_0 \in X,
\end{split}
\tag{ConSys}
\end{align}
where $u\in C_{\topo,b}([0,T];U)$. The function $x$ is called \emph{state function} and $u$ is called \emph{control function}. 
The unique mild solution of \eqref{eq:controlsystem} is given by Duhamel's formula
\[
x(t)=S_{t}x_{0}+\int_{0}^{t} S_{t-r}Bu(r)\d r\qquad (t\in[0,T])
\]
due to \cite[Proposition 11 (a)]{kuehnemund2003} and \prettyref{prop:pettis}.
Let $\calP_{\topo_X}$ be a directed system of seminorms that induces the topology $\topo_X$.

\begin{definition}
We say that \eqref{eq:controlsystem} is \emph{cost-uniform approximately $\topo_X$-null-controllable in time $T$ via $C_{\topo,b}([0,T];U)$} 
if there exists $C \geq 0$ such that for all $x_0\in X$, $\varepsilon>0$ and $p_{\topo_X}\in\calP_{\topo_X}$ 
there exists $u\in C_{\topo,b}([0,T];U)$ with $\norm{u}_{\infty}\leq C \norm{x_0}_X$ such that $p_{\topo_X}(x(T)) \leq \varepsilon$. 
\end{definition}
We note that this definition of cost-uniform approximate $\topo_X$-null-controllability does not depend 
on the choice of $\calP_{\topo_X}$.

\begin{remark}
    We can analogously define the notion of cost-uniform approximate $\gamma_X$-null-controllability in time $T$ via $C_{\topo,b}([0,T];U)$ by using $p_{\gamma_X}\in\calP_{\gamma_X}$ instead of $p_{\topo_X}\in\calP_{\topo_X}$. In view of \prettyref{prop:cost_uniform_approx_null_contr} and \prettyref{rem:gamma_closure} these two notions are equivalent.
\end{remark}

Next, we prepare the definition of final state observability of the dual system where we need to clarify which kind of 
duality we have to use. 

\begin{definition}
Let $(X,\norm{\cdot}_X,\topo_X)$ be a Saks space and $\K=\R$ or $\C$ the scalar field of $X$. 
\begin{enumerate} 
\item We call $(X,\norm{\cdot}_X,\topo_X)$ \emph{C-sequential} if $(X,\gamma_X)$ is C-sequential, i.e.~every convex sequentially 
open subset of $(X,\gamma_X)$ is already open (see \cite[p.~273]{snipes1973}).
\item We call $(X,\norm{\cdot}_X,\topo_X)$ a \emph{Mazur space} if $(X,\gamma_X)$ is a Mazur space, i.e. 
\[
X_{\gamma}'\coloneqq  (X,\gamma_X)'=\{x'\colon X\to \K\ |\ x'\;\text{linear and } \gamma_{X}\text{-sequentially continuous}\}
\]
(see \cite[p.~40]{wilansky1981}).
\end{enumerate}
\end{definition}

Examples of C-sequential Saks spaces can be found in \cite[Example 2.4, Remarks 3.19, 3.20, Corollary 3.23]{kruse_schwenninger2022}.

\begin{remark}\label{rem:C_sequential_Mazur}
Let $(X,\norm{\cdot}_X,\topo_X)$ be a Saks space. 
\begin{enumerate} 
\item If $(X,\norm{\cdot}_X,\topo_X)$ is C-sequential, then it is a Mazur space by \cite[Theorem 7.4]{wilansky1981} 
(cf.~\cite[3.6 Proposition (b)]{kruse_schwenninger2023}). 
\item The space
\[
X^{\circ}\coloneqq  \{x'\in X'\ |\ x'\;\topo_X\text{-sequentially continuous on } \norm{\cdot}_X\text{-bounded sets}\}
\]
is a closed linear subspace of the norm dual $X'$ and hence a Banach space with norm given by
$\norm{x^{\circ}}_{X^{\circ}}\coloneqq  \norm{x^{\circ}}_{X'}$ for $x^{\circ}\in X^{\circ}$ due to \cite[Proposition 2.1]{farkas2011} 
(note that the proof of \cite[Proposition 2.1]{farkas2011} does not use \cite[Hypothesis A (ii)]{farkas2011} 
which is the sequential completeness of $(X,\norm{\cdot}_X,\topo_X)$). 
We have $X^{\circ}=X_{\gamma}'$ if and only if $(X,\gamma_X)$ is a Mazur space by \cite[3.5 Remark]{kruse_schwenninger2023}. 
\end{enumerate}
\end{remark}

Let $(X,\norm{\cdot}_X)$ and $(U,\norm{\cdot}_U)$ be Banach spaces. We recall that the dual 
operator $B'$ of an element $B\in\calL(U;X)$ is defined by $\langle B'x',u\rangle\coloneqq  \langle x',Bu\rangle$ 
for $x'\in X'$ and $u\in U$.

\begin{proposition}\label{prop:adj_mixed}
Let $(X,\norm{\cdot}_X,\topo_X)$ be a sequentially complete C-sequential Saks space and 
$(S_t)_{t\geq 0}$ a locally $\topo_X$-bi-continuous semigroup on $X$. 
Then the operators given by $S^{\circ}_{t}x^{\circ}\coloneqq S_{t}'x^{\circ}$ for $t\geq 0$ and $x^{\circ}\in X^{\circ}$
belong to $\calL(X^{\circ})$ and form a $\tau_{c}(X^{\circ},(X,\norm{\cdot}_{X}))$-strongly continuous, exponentially bounded semigroup on $X^{\circ}$ where $\tau_{c}(X^{\circ},(X,\norm{\cdot}_{X}))$ denotes the topology of uniform convergence on 
$\norm{\cdot}_{X}$-com\-pact sets.
\end{proposition}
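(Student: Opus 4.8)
The plan is to check, roughly in this order: that each $S_t'$ maps $X^\circ$ into itself, that the restriction $S^\circ_t$ is bounded on $X^\circ$ (which simultaneously gives exponential boundedness), that $(S^\circ_t)_{t\geq 0}$ is a semigroup, and finally --- the only nontrivial point --- that it is $\tau_c(X^\circ,(X,\norm{\cdot}_X))$-strongly continuous.

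For invariance and boundedness I would argue directly from the definition of $X^\circ$. Given $x^\circ\in X^\circ$ and a $\norm{\cdot}_X$-bounded sequence $(x_n)_{n\in\N}$ with $\topo_X\text{-}\lim_n x_n=x$, exponential boundedness makes $(S_t x_n)_{n\in\N}$ $\norm{\cdot}_X$-bounded and local bi-equicontinuity gives $\topo_X\text{-}\lim_n S_t x_n = S_t x$; since $x^\circ$ is $\topo_X$-sequentially continuous on $\norm{\cdot}_X$-bounded sets, $\langle S_t'x^\circ,x_n\rangle=\langle x^\circ,S_t x_n\rangle\to\langle x^\circ,S_t x\rangle$, so $S_t'x^\circ\in X^\circ$. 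Writing $S^\circ_t\coloneqq S_t'|_{X^\circ}$, the identities $\norm{\cdot}_{X^\circ}=\norm{\cdot}_{X'}$ and $\norm{S_t'}_{\calL(X')}=\norm{S_t}_{\calL(X)}\leq M\euler^{\omega t}$ yield $S^\circ_t\in\calL(X^\circ)$ together with the exponential bound. The semigroup law is then immediate: $S^\circ_0=\mathrm{id}_{X^\circ}$ and $S^\circ_t S^\circ_s=(S_s S_t)'|_{X^\circ}=(S_{t+s})'|_{X^\circ}=S^\circ_{t+s}$, using that $(S_t)_{t\geq0}$ commutes and $(AB)'=B'A'$.

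For the $\tau_c$-strong continuity I would fix $x^\circ\in X^\circ$, $t_0\geq 0$, choose $T>t_0$ and a $\norm{\cdot}_X$-compact $K\subseteq X$, and study $\Phi\colon[0,T]\times K\to\K$, $\Phi(t,x)\coloneqq\langle x^\circ,S_t x\rangle=\langle S^\circ_t x^\circ,x\rangle$. First I would show $\Phi$ is sequentially continuous: for $(t_n,x_n)\to(t_0,x_0)$ in $[0,T]\times(K,\norm{\cdot}_X)$ one splits $S_{t_n}x_n-S_{t_0}x_0=S_{t_n}(x_n-x_0)+(S_{t_n}-S_{t_0})x_0$, where the first term tends to $0$ in $\norm{\cdot}_X$ (hence in $\topo_X$) because $\norm{S_{t_n}}_{\calL(X)}\leq M\euler^{\abs{\omega}T}$ on $[0,T]$, and the second tends to $0$ in $\topo_X$ by $\topo_X$-strong continuity; since $\{S_t x\mid t\in[0,T],\,x\in K\}$ is $\norm{\cdot}_X$-bounded, $\topo_X$-sequential continuity of $x^\circ$ on bounded sets then gives $\Phi(t_n,x_n)\to\Phi(t_0,x_0)$. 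As $[0,T]\times K$ is a compact metric space, sequential continuity upgrades to continuity and then to uniform continuity, so $p_K(S^\circ_t x^\circ-S^\circ_{t_0}x^\circ)=\sup_{x\in K}\abs{\Phi(t,x)-\Phi(t_0,x)}\to 0$ as $t\to t_0$ in $[0,T]$; since $K$ was arbitrary, this is exactly continuity of $t\mapsto S^\circ_t x^\circ$ in $\tau_c(X^\circ,(X,\norm{\cdot}_X))$.

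The hard part is this last step, and the reason is structural: the available continuity properties of $\topo_X$ and of the functionals in $X^\circ$ are only \emph{sequential}, and only on $\norm{\cdot}_X$-bounded sets, whereas $\tau_c$-continuity is an a priori non-sequential, uniform statement. The device that makes it go through is that a $\norm{\cdot}_X$-compact set $K$ is both metrizable and $\norm{\cdot}_X$-bounded, so the sequential hypotheses (local bi-equicontinuity, $\topo_X$-strong continuity, sequential continuity of $x^\circ$ on bounded sets) can be combined into sequential continuity of $\Phi$ on $[0,T]\times K$ and then boosted to uniform continuity by compactness. The remaining verifications are routine once one unwinds the definition of $X^\circ$ and of the Banach-space adjoint.
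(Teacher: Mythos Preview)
Your proof is correct, but it follows a genuinely different route from the paper's.

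The paper leverages the C-sequential hypothesis structurally: it first invokes external results to obtain that $(S_t)_{t\geq 0}$ is quasi-$\gamma_X$-equicontinuous and that $X^{\circ}=X_{\gamma}'$, so $S^{\circ}_t$ is the $\gamma_X$-dual map of $S_t$. From there the $\sigma(X^{\circ},X)$-strong continuity is immediate (each $t\mapsto\langle x^{\circ},S_tx\rangle$ is continuous), and this upgrades to strong continuity in the mixed topology $\gamma^{\circ}=\gamma(\norm{\cdot}_{X^{\circ}},\sigma(X^{\circ},X))$ because the two agree on $\norm{\cdot}_{X^{\circ}}$-bounded sets; a further cited identification $\gamma^{\circ}=\tau_c(X^{\circ},(X,\norm{\cdot}_X))$ finishes the argument. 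Your approach bypasses all of this: you work directly with the $\tau_c$-seminorms, observing that a $\norm{\cdot}_X$-compact $K$ is both metric and $\norm{\cdot}_X$-bounded, so the purely sequential data (local bi-equicontinuity, $\topo_X$-strong continuity, sequential continuity of $x^{\circ}$ on bounded sets) assemble into sequential continuity of $(t,x)\mapsto\langle x^{\circ},S_tx\rangle$ on the compact metric space $[0,T]\times K$, which is then automatically uniform.

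Your argument is more elementary and self-contained, and in fact never uses the C-sequential assumption. The paper's route, on the other hand, isolates the identities $X^{\circ}=X_{\gamma}'$ and $\gamma^{\circ}=\tau_c$ which it reuses later in the section, so its proof is doing double duty.
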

\begin{proof}
Since $(X,\gamma_{X})$ is C-sequential, in particular Mazur by \prettyref{rem:C_sequential_Mazur} (a), 
$(S_t)_{t\geq 0}$ is quasi-$\gamma_{X}$-equicontinuous and 
$X^{\circ}=X_{\gamma}'$ by \cite[Theorem 3.17 (a)]{kruse_schwenninger2022} and \prettyref{rem:C_sequential_Mazur} (b). 
In particular, $S^{\circ}_{t}$ is the $\gamma_{X}$-dual map of $S_{t}$ for all $t\geq 0$.
Furthermore, $(S^{\circ}_{t})_{t\geq 0}$ is exponentially bounded (w.r.t.~$\norm{\cdot}_{\calL(X^{\circ})}$) because 
$(S_t)_{t\geq 0}$ is exponentially bounded.
It follows that $S^{\circ}_{t}\in \calL(X^{\circ})$ for all $t\geq 0$ and $(S^{\circ}_{t})_{t\geq 0}$ is a 
$\sigma(X^{\circ},X)$-strongly continuous semigroup. As $\sigma(X^{\circ},X)$ and the mixed topology 
$\gamma^{\circ}\coloneqq \gamma(\|\cdot\|_{X^{\circ}},\sigma(X^{\circ},X))$ coincide on $\|\cdot\|_{X^{\circ}}$-bounded sets 
by \prettyref{defi:saks_mixed} (b), $(S^{\circ}_{t})_{t\geq 0}$ is also $\gamma^{\circ}$-strongly continuous. 
Due to \cite[Proposition 3.22 (a)]{kruse_schwenninger2022} we have $\gamma^{\circ}=\tau_{c}(X^{\circ},(X,\norm{\cdot}_{X}))$. 
\end{proof}

The semigroup $(S^{\circ}_{t})_{t\geq 0}$ in the setting of \prettyref{prop:adj_mixed} resembles a bi-continuous semigroup. 
For instance, we note that the generator $(-A^{\circ}, D(A^{\circ}))$ of $(S^{\circ}_{t})_{t\geq 0}$ from \prettyref{prop:adj_mixed} is given by 
\begin{align*}
D(A^{\circ}) & = \Bigl\{x^{\circ}\in X^{\circ}\ |\ \tau_{c}(X^{\circ},(X,\norm{\cdot}_{X}))
\text{-}\lim_{t\to 0\rlim}\frac{S^{\circ}_{t}x^{\circ}-x^{\circ}}{t}\;\text{exists in } X^{\circ}\Bigr\},\\
-A^{\circ}x & = \tau_{c}(X^{\circ},(X,\norm{\cdot}_{X}))\text{-}\lim_{t\to 0\rlim}\frac{S^{\circ}_{t}x^{\circ}-x^{\circ}}{t}
\qquad (x^{\circ}\in D(A^{\circ}))
\end{align*}
and fulfils 
\begin{align*}
D(A^{\circ}) & = \Bigl\{x^{\circ}\in X^{\circ}\ |\ 
\sigma(X^{\circ},X)\text{-}\lim_{t\to 0\rlim}\frac{S^{\circ}_{t}x^{\circ}-x^{\circ}}{t}\;\text{ex.~in } X^{\circ},\;
\sup_{t\in(0,1]}\frac{\norm{S^{\circ}_{t}x^{\circ}-x^{\circ}}_{X^{\circ}}}{t}<\infty\Bigr\},\\
-A^{\circ}x & = \sigma(X^{\circ},X)\text{-}\lim_{t\to 0\rlim}\frac{S^{\circ}_{t}x^{\circ}-x^{\circ}}{t}
\qquad (x^{\circ}\in D(A^{\circ}))
\end{align*}
by \cite[I.1.10 Proposition]{cooper1978} for the mixed topology $\gamma^{\circ}=\tau_{c}(X^{\circ},(X,\norm{\cdot}_{X}))$ 
and the exponential boundedness of $(S^{\circ}_{t})_{t\geq 0}$ 
(cf.~\cite[p.~6]{kruse_schwenninger2023} in the bi-continuous setting). 
What is missing for bi-continuity are sequential completeness of the Saks space 
$(X^{\circ},\|\cdot\|_{X^{\circ}},\sigma(X^{\circ},X))$ and (local) bi-equicontinuity.

\begin{remark}
\label{rem:adj_bi_cont}
Let $(X,\norm{\cdot}_X,\topo_X)$ be a sequentially complete Saks space and 
$(S_t)_{t\geq 0}$ a $\topo_X$-bi-continuous semigroup on $X$ with generator $(-A,D(A))$.
If 
\begin{enumerate}
\item[(i)] $X^{\circ}\cap \{x'\in X'\ |\ \|x'\|_{X'}\leq 1\}$ is sequentially complete w.r.t.\ $\sigma(X^{\circ},X)$,
\item[(ii)] every $\|\cdot\|_{X'} $-bounded $\sigma(X^{\circ},X)$-null sequence in $X^{\circ}$ 
is $\topo_X$-equicontinuous on $\norm{\cdot}_X$-bounded sets,
\end{enumerate}
(see \cite[Hypothesis B and C]{farkas2011}), then 
$(X^{\circ},\|\cdot\|_{X^{\circ}},\sigma(X^{\circ},X))$ is a sequentially complete Saks space and 
$(S^{\circ}_{t})_{t\geq 0}$ is a locally $\sigma(X^{\circ},X)$-bi-continuous semigroup on $X^{\circ}$ by 
\cite[Proposition 2.4]{farkas2011} with generator 
$(-A^{\circ},D(A^{\circ}))$ fulfilling
\begin{align*}
D(A^{\circ}) & = \{x^{\circ}\in X^{\circ}\ |\ \exists y^{\circ}\in X^{\circ}\ \forall x\in D(A): 
\langle -Ax,x^{\circ} \rangle=\langle x,y^{\circ} \rangle\},\\
 -A^{\circ}x^{\circ}& = y^{\circ} \quad(x^{\circ}\in D(A^{\circ})),
\end{align*}
by \cite[Lemma 1]{budde2021}.
\end{remark}

We refer to \cite[3.9 Example]{kruse_schwenninger2023} for examples of sequentially complete Saks spaces satisfying (i) and (ii) of \prettyref{rem:adj_bi_cont}.

\begin{proposition}\label{prop:dual_B}
Let $(X,\norm{\cdot}_X,\topo_X)$ and $(U,\norm{\cdot}_U,\topo_U)$ be Saks spaces, 
and $B\in\calL(U;X)$ such that $B$ is also sequentially $\topo_U$-$\topo_X$-continuous on $\norm{\cdot}_{U}$-bounded sets.
Then $B^{\circ}\coloneqq  B'|_{X^{\circ}}\in\calL(X^{\circ};U^{\circ})$ and is also 
$\sigma(X^{\circ},X)$-$\sigma(U^{\circ},U)$-continuous.
\end{proposition}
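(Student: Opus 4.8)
The plan is to verify the three assertions in turn: that $B^\circ := B'|_{X^\circ}$ maps $X^\circ$ into $U^\circ$, that it is bounded as an operator between these Banach spaces, and that it is $\sigma(X^\circ,X)$-$\sigma(U^\circ,U)$-continuous. First I would recall that $B'\in\calL(X';U')$ with $\norm{B'}_{\calL(X';U')}=\norm{B}_{\calL(U;X)}$, so in particular the restriction $B^\circ = B'|_{X^\circ}$ is norm-bounded on $X^\circ$ with $\norm{B^\circ}_{\calL(X^\circ;U^\circ)}\leq \norm{B}_{\calL(U;X)}$ (once we know the codomain is indeed $U^\circ$), using that the norm on $X^\circ$ (resp.\ $U^\circ$) is the restriction of the norm dual norm by \prettyref{rem:C_sequential_Mazur} (b).

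The substantive step is to show $B'x^\circ\in U^\circ$ whenever $x^\circ\in X^\circ$, i.e.\ that $B'x^\circ$ is $\topo_U$-sequentially continuous on $\norm{\cdot}_U$-bounded sets. So I would fix $x^\circ\in X^\circ$ and a $\norm{\cdot}_U$-bounded sequence $(u_n)_{n\in\N}$ in $U$ with $\topo_U\text{-}\lim_{n\to\infty} u_n = u$ for some $u\in U$. Since $B$ is sequentially $\topo_U$-$\topo_X$-continuous on $\norm{\cdot}_U$-bounded sets, we get $\topo_X\text{-}\lim_{n\to\infty} Bu_n = Bu$; moreover $(Bu_n)_{n\in\N}$ is $\norm{\cdot}_X$-bounded because $B\in\calL(U;X)$ and $(u_n)_{n\in\N}$ is $\norm{\cdot}_U$-bounded. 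As $x^\circ\in X^\circ$ is $\topo_X$-sequentially continuous on $\norm{\cdot}_X$-bounded sets, it follows that $\langle x', Bu_n\rangle \to \langle x', Bu\rangle$ where $x' = x^\circ$, i.e.\ $\langle B'x^\circ, u_n\rangle \to \langle B'x^\circ, u\rangle$. Hence $B'x^\circ$ is $\topo_U$-sequentially continuous on $\norm{\cdot}_U$-bounded sets, so $B'x^\circ\in U^\circ$. This establishes $B^\circ\in\calL(X^\circ;U^\circ)$.

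Finally, for the weak continuity, I would observe that $\sigma(X^\circ,X)$-$\sigma(U^\circ,U)$-continuity of $B^\circ$ is exactly the statement that for each fixed $u\in U$ the functional $X^\circ\ni x^\circ\mapsto \langle B^\circ x^\circ, u\rangle = \langle x^\circ, Bu\rangle$ is $\sigma(X^\circ,X)$-continuous; but this is immediate since $Bu\in X$ and $\sigma(X^\circ,X)$ is precisely the topology making all evaluations at points of $X$ continuous. (Equivalently, $B^\circ$ is the restriction of the adjoint of $B^\circ$ acting between the appropriate duals, but the direct argument is cleaner.) No step presents a genuine obstacle; the only point requiring care is keeping track of which dual pairing and which ``sequential continuity on bounded sets'' clause is in play, which is why I would spell out the bounded-sequence argument in the second paragraph explicitly rather than invoking a general duality principle.
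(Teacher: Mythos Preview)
Your proof is correct and follows essentially the same route as the paper's own argument: both verify $B'x^{\circ}\in U^{\circ}$ via the identical bounded-sequence argument, obtain the norm bound from $\norm{B'}_{\calL(X';U')}=\norm{B}_{\calL(U;X)}$ (the paper spells this out explicitly rather than citing it), and read off $\sigma(X^{\circ},X)$-$\sigma(U^{\circ},U)$-continuity from the duality identity $\langle B^{\circ}x^{\circ},u\rangle=\langle x^{\circ},Bu\rangle$. The only differences are in presentation order and level of detail.
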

\begin{proof}
Let $x^{\circ}\in X^{\circ}$, $u\in U$ and $(u_{n})_{n\in\N}$ a $\norm{\cdot}_{U}$-bounded sequence in $U$ 
that $\topo_U$-converges to $u$. Since $B\in\calL(U;X)$ and $B$ is also sequentially $\topo_U$-$\topo_X$-continuous 
on $\norm{\cdot}_{U}$-bounded sets, we have 
that $(Bu_{n})_{n\in\N}$ is $\norm{\cdot}_{X}$-bounded and $\topo_X$-convergent to $Bu$. This implies 
\[
 \langle B^{\circ}x^{\circ}, u\rangle
=\langle x^{\circ}, Bu\rangle
=\lim_{n\to\infty}\langle x^{\circ}, Bu_{n}\rangle,
\]
yielding $B^{\circ}x^{\circ}\in U^{\circ}$ and the $\sigma(X^{\circ},X)$-$\sigma(U^{\circ},U)$-continuity of $B^{\circ}$. 
Furthermore, we note that 
\begin{align*}
 \norm{B^{\circ}x^{\circ}}_{U^{\circ}}
&=\norm{B^{\circ}x^{\circ}}_{U'}
 =\sup_{\norm{u}_{U}\leq 1}|\langle B^{\circ}x^{\circ}, u\rangle|
 =\sup_{\norm{u}_{U}\leq 1}|\langle x^{\circ}, Bu\rangle|
 \leq\sup_{\norm{u}_{U}\leq 1}\norm{Bu}_{X}\norm{x^{\circ}}_{X^{\circ}}\\
&=\norm{B}_{\calL(U;X)}\norm{x^{\circ}}_{X^{\circ}}
\end{align*}
and thus $B^{\circ}\in\calL(X^{\circ};U^{\circ})$.
\end{proof}

\begin{remark}\label{rem:B_seq_cont}
Let $(X,\norm{\cdot}_X,\topo_X)$ and $(U,\norm{\cdot}_U,\topo_U)$ be Saks spaces, $B\in\calL(U;X)$. Consider the following assertions:
\begin{enumerate}
\item $B$ is $\topo_U$-$\topo_X$-continuous on $\norm{\cdot}_{U}$-bounded sets.
\item $B$ is sequentially $\topo_U$-$\topo_X$-continuous on $\norm{\cdot}_{U}$-bounded sets.
\end{enumerate}
Then (a)$\Rightarrow$(b) holds. Moreover, if $(U,\norm{\cdot}_U,\topo_U)$ is C-sequential, then (b)$\Rightarrow$(a) holds.
\end{remark}
\begin{proof}
The implication (a)$\Rightarrow$(b) is obviously true. Let assertion (b) hold. It follows from \cite[Theorem 2.3.1]{wiweger1961}
that $B$ is sequentially $\gamma_U$-$\topo_X$-continuous. Hence, (a) holds by \cite[Theorem 7.4]{wilansky1981} 
if $(U,\gamma_U)$ is C-sequential.
\end{proof}

\begin{proposition}\label{prop:cost_uniform_approx_null_contr}
Let $(V,\norm{\cdot}_V)$, $(W,\norm{\cdot}_W)$ be Banach spaces, $(Z,\norm{\cdot}_Z,\topo_Z)$ a Saks space, 
$\calP_{\gamma_Z}$ a directed system of seminorms that induces the topology $\gamma_Z$,
$F\in\calL(V;Z)$ and $G\in\calL(W;Z)$. Then the following assertions are equivalent:
\begin{enumerate}
\item There exists $c_1\geq 0$ such that $\norm{F'z'}_{V'}\leq c_{1}\norm{G'z'}_{W'}$ 
for all $z'\in Z_{\gamma}'$.
\item There exists $c_2\geq 0$ such that 
\[
\{Fv\ |\ v\in V,\,\norm{v}_V \leq 1\}\subseteq\overline{\{Gw\ |\ w\in W,\,\norm{w}_W \leq c_2\}}^{\gamma_Z}.
\]
\item There exists $c_3\geq 0$ such that for all $v\in V$, $\varepsilon >0$ and $p_{\gamma_Z}\in\calP_{\gamma_Z}$ 
there is $w\in W$ with $\norm{w}_W \leq c_3\norm{v}_V$ such that $p_{\gamma_Z}(Fv+Gw)\leq\varepsilon$.
\end{enumerate}
Moreover, we can choose $c_1=c_2=c_3$.
\end{proposition}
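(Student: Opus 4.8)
The plan is to prove the cycle (a)$\Rightarrow$(b)$\Rightarrow$(a) together with the equivalence (b)$\Leftrightarrow$(c), keeping track of the constants so that the common value $c_1=c_2=c_3$ can be chosen. Throughout I would write $B_V$ and $B_W$ for the closed unit balls of $V$ and $W$, and use two facts: $(Z,\gamma_Z)$ is a Hausdorff locally convex space, as recorded after \prettyref{defi:saks_mixed}, and $Z_\gamma'=(Z,\gamma_Z)'\subseteq Z'$ since $\gamma_Z\subseteq\topo_{\norm{\cdot}_Z}$, so that $F'z'\in V'$ and $G'z'\in W'$ are well defined for $z'\in Z_\gamma'$.

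First I would settle (b)$\Leftrightarrow$(c), which is essentially a translation of membership in the $\gamma_Z$-closure into the language of the directed seminorm system $\calP_{\gamma_Z}$, together with positive homogeneity. Since $\calP_{\gamma_Z}$ is directed, the sets $\{z\in Z\ |\ p_{\gamma_Z}(z-z_0)\le\varepsilon\}$ with $p_{\gamma_Z}\in\calP_{\gamma_Z}$, $\varepsilon>0$, form a neighbourhood basis of $z_0$; hence $Fv$ lies in the $\gamma_Z$-closure of $\{Gw\ |\ \norm{w}_W\le c\}$ if and only if for every $p_{\gamma_Z}$ and $\varepsilon>0$ there is $w\in W$ with $\norm{w}_W\le c$ and $p_{\gamma_Z}(Fv-Gw)\le\varepsilon$. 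Replacing $w$ by $-w$ (the ball is balanced) and rescaling $v$ by $\norm{v}_V$ — the case $v=0$ being handled by $w=0$ — converts (b) with constant $c_2$ into (c) with $c_3=c_2$ and conversely.

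Next, for (b)$\Rightarrow$(a), fix $z'\in Z_\gamma'$ and $v\in B_V$. Then $Fv$ belongs to $K:=\overline{\{Gw\ |\ \norm{w}_W\le c_2\}}^{\gamma_Z}$, and since $\abs{\dupa{z'}{\cdot}}$ is $\gamma_Z$-continuous its supremum over $K$ equals its supremum over $\{Gw\ |\ \norm{w}_W\le c_2\}$; hence $\abs{\dupa{F'z'}{v}}=\abs{\dupa{z'}{Fv}}\le c_2\norm{G'z'}_{W'}$, and taking the supremum over $v\in B_V$ gives (a) with $c_1=c_2$. The reverse implication (a)$\Rightarrow$(b) is the crux and is where I would invoke Hahn--Banach separation. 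Set $c_2:=c_1$ and note that $K$ above is a nonempty, $\gamma_Z$-closed, convex and balanced subset of $Z$ (convexity and balancedness of the norm-ball image pass to the $\gamma_Z$-closure, as scalar multiplication is $\gamma_Z$-continuous). If (b) failed, there would be $v_0\in B_V$ with $Fv_0\notin K$; separating $Fv_0$ from $K$ in the Hausdorff locally convex space $(Z,\gamma_Z)$ yields $z'\in Z_\gamma'$ and $\alpha\in\R$ with $\sup_{k\in K}\Re\dupa{z'}{k}\le\alpha<\Re\dupa{z'}{Fv_0}$. Because $K$ is balanced, the left-hand side equals $\sup_{k\in K}\abs{\dupa{z'}{k}}=c_2\norm{G'z'}_{W'}$, while $\Re\dupa{z'}{Fv_0}\le\abs{\dupa{F'z'}{v_0}}\le\norm{F'z'}_{V'}$, whence $c_1\norm{G'z'}_{W'}<\norm{F'z'}_{V'}$, contradicting (a). Chaining the implications closes the equivalence, and since each step preserves the constant, $c_1=c_2=c_3$ is admissible.

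The step I expect to need the most care is the separation argument, precisely because the separating functional must be continuous for the \emph{mixed} topology $\gamma_Z$ and hence lie in $Z_\gamma'$ — the dual that appears in (a) — rather than merely in $Z'$; this is exactly what Hahn--Banach delivers once one knows $K$ is $\gamma_Z$-closed and $(Z,\gamma_Z)$ is Hausdorff locally convex, so the difficulty is one of setting things up in the right topology, not of computation. The remaining verifications (convexity and balancedness passing to $\gamma_Z$-closures, and the homogeneity bookkeeping in (b)$\Leftrightarrow$(c)) are routine.
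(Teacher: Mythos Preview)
Your proof is correct and follows essentially the same strategy as the paper: the equivalence (b)$\Leftrightarrow$(c) is obtained by unpacking the $\gamma_Z$-closure via the seminorm system and rescaling, and (a)$\Leftrightarrow$(b) rests on the Hahn--Banach/supporting-function duality between closed convex (balanced) sets and $Z_\gamma'$. The only cosmetic difference is that the paper invokes a ready-made supporting-function characterization of $N\subseteq\overline{M}^{\gamma_Z}$ (cited from \cite{Carja-88}) to handle both directions of (a)$\Leftrightarrow$(b) at once, whereas you prove (b)$\Rightarrow$(a) by a direct continuity-of-$z'$ argument and (a)$\Rightarrow$(b) by an explicit separation; the content is identical.
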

\begin{proof}
First, we note that if $M$ and $N$ are convex sets in $Z$, then we have $N\subseteq \overline{M}^{\gamma_Z}$ if and only if 
\begin{equation}\label{eq:supporting_function}
\sup_{z\in M} \Re\langle z',z \rangle \leq \sup_{z\in N} \Re\langle z',z \rangle
\end{equation}
for all $z'\in Z_{\gamma}'$ by \cite[p.~220]{Carja-88}. Second, let $z'\in Z_{\gamma}'$. For every 
$z\in Z$ there is $\lambda_z\in\C$ with $|\lambda_z|\leq 1$ such that 
$|\langle z', z\rangle|=\Re\langle z', \lambda_z z\rangle $. Thus, if $M$ and $N$ are additionally 
circled sets, then \eqref{eq:supporting_function} is equivalent to
\[
\sup_{z\in M} |\langle z',z \rangle| \leq \sup_{z\in N} |\langle z',z \rangle|
\]
for all $z'\in Z_{\gamma}'$. 
Hence, setting $N\coloneqq  \{Fv\ |\ v\in V,\,\norm{v}_V \leq 1\}$, $M\coloneqq \{Gw\ |\ w\in W,\,\norm{w}_W \leq c_2\}$ 
and observing that $N,M\subseteq Z$ are convex, circled sets, we obtain the equivalence of (a) and (b), 
and that we can choose $c_1=c_2$. 

Let us turn to the equivalence of (b) and (c). For $\varepsilon >0$ and $p_{\gamma_Z}\in\calP_{\gamma_Z}$ we set 
$U_{\varepsilon, p_{\gamma_Z}}\coloneqq  \{z\in Z\;|\;p_{\gamma_Z}(z)\leq \varepsilon\}$. For any $M\subseteq Z$ we have 
\begin{equation}\label{eq:mixed_closure}
\overline{M}^{\gamma_Z}=\bigcap_{\varepsilon>0,\,p_{\gamma_Z}\in\calP_{\gamma_Z}}M+U_{\varepsilon, p_{\gamma_Z}}
\end{equation}
(see e.g.~\cite[2.1.4 Proposition]{jarchow1981}). Let assertion (b) hold, $v\in V$ with $v\neq 0$, $\varepsilon >0$ and 
$p_{\gamma_Z}\in\calP_{\gamma_Z}$. Then there are $\widetilde{w}\in W$ with $\norm{\widetilde{w}}_W \leq c_2$ 
and $z\in Z$ with $p_{\gamma_Z}(z)\leq\tfrac{\varepsilon}{\norm{v}_{V}}$ such that 
$F\left(-\tfrac{v}{\norm{v}_{V}}\right)=G\widetilde{w}+z$ by (b) and \eqref{eq:mixed_closure}. From writing 
\[
-Fv=\norm{v}_{V}F\left(\frac{-v}{\norm{v}_{V}}\right)=\norm{v}_{V}(G\widetilde{w}+z)=G(\norm{v}_{V}\widetilde{w})+\norm{v}_{V}z,
\]
setting $w\coloneqq \norm{v}_{V}\widetilde{w}$, and 
using $\norm{w}_W=\norm{v}_{V}\norm{\widetilde{w}}_{W}\leq c_{2}\norm{v}_{V}$ and 
\[
 p_{\gamma_Z}(Fv+Gw)
=p_{\gamma_Z}(-\norm{v}_{V}z)
=\norm{v}_{V}p_{\gamma_Z}(z)
\leq\norm{v}_{V}\frac{\varepsilon}{\norm{v}_{V}}
=\varepsilon,
\]
we conclude that (c) holds (the case $v=0$ is obvious). 

Now, let assertion (c) hold. Let $v\in V$ with $\norm{v}_{V}\leq 1$, $\varepsilon >0$ and $p_{\gamma_Z}\in\calP_{\gamma_Z}$. 
Then there is $\widetilde{w}\in W$ with $\norm{\widetilde{w}}_{W}\leq c_3$ such that 
$p_{\gamma_Z}(Fv+G\widetilde{w})\leq\varepsilon$. Setting $w\coloneqq  -\widetilde{w}$, using 
$\norm{w}=\norm{\widetilde{w}}_{W}\leq c_3$ and $Fv=Gw+Fv+G\widetilde{w}$, we see that (b) holds 
due to \eqref{eq:mixed_closure}. The proof of the equivalence of (b) and (c) also shows that we can choose $c_2=c_3$.
\end{proof}

The proof of the equivalence of (a) and (b) is just an adaptation of the proof of 
\cite[Theorem 2.2, (iii)$\Leftrightarrow$(iv)]{Carja-88}.

\begin{remark}\label{rem:gamma_closure}
Let $(Z,\norm{\cdot}_Z,\topo_Z)$ be a Saks space. Since the mixed topology $\gamma_Z$ coincides with $\topo_Z$ on 
$\norm{\cdot}_Z$-bounded sets, we may equivalently replace the $\gamma_Z$-closure by the $\topo_Z$-closure in 
\prettyref{prop:cost_uniform_approx_null_contr} (b) and thus $\calP_{\gamma_Z}$ in (c) by a directed system of seminorms 
$\calP_{\topo_Z}$ that induces the topology $\topo_Z$, too.
\end{remark}

\begin{proposition}\label{prop:convolution_cont}
Let $(X,\norm{\cdot}_X,\topo_X)$ be a sequentially complete C-sequential Saks space,  
$(U,\norm{\cdot}_U,\topo_U)$ a Saks space, 
$(S_t)_{t\geq 0}$ a locally $\topo_X$-bi-continuous semigroup on $X$ 
and $B\in\calL(U;X)$ such that $B$ is also $\topo_U$-$\topo_X$-continuous on $\norm{\cdot}_{U}$-bounded sets.
\begin{enumerate}
\item $(S_{t}B)_{t\geq 0}$ is quasi-$\gamma_U$-$\gamma_X$-equicontinuous.  
\item Let $T>0$ and $\mathcal{B}^{T}\colon C_{\tau,b}([0,T];U)\to X$ be given by 
\[
\calB^{T}u\coloneqq  \int_{0}^{T}S_{T-t}Bu(t)\d t .
\]
Then $\calB^{T}\in\calL(C_{\tau,b}([0,T];U);X)=\calL((C_{\tau,b}([0,T];U),\norm{\cdot}_{\infty});(X,\norm{\cdot}_{X}))$ 
and $\calB^{T}$ is also $\gamma_{\infty}$-$\gamma_{X}$-continuous.
\item Let ${\calB^{T}}^{\circ}x^{\circ}\coloneqq  {\calB^{T}}'x^{\circ}$ for $x^{\circ}\in X^{\circ}$. 
Then ${\calB^{T}}^{\circ}x^{\circ}\in (C_{\tau,b}([0,T];U),\gamma_{\infty})'$ for all $x^{\circ}\in X^{\circ}$.
\end{enumerate}
\end{proposition}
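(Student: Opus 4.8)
The plan is to prove the three items in order, with (b) and (c) built on (a) and on \prettyref{prop:pettis}. Throughout I fix $M\geq 1$ and $\omega\in\R$ with $\norm{S_t}_{\calL(X)}\leq M\euler^{\omega t}$ for all $t\geq 0$, enlarging $\omega$ if necessary so that, in addition, $(\euler^{-\omega t}S_t)_{t\geq 0}$ is $\gamma_X$-equicontinuous; this is possible because $(X,\gamma_X)$ is C-sequential, hence Mazur by \prettyref{rem:C_sequential_Mazur} (a), so $(S_t)_{t\geq 0}$ is quasi-$\gamma_X$-equicontinuous by \cite[Theorem 3.17 (a)]{kruse_schwenninger2022}, exactly as in the proof of \prettyref{prop:adj_mixed}. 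For (a) I would then note that $B\colon(U,\gamma_U)\to(X,\gamma_X)$ is continuous: since $B\in\calL(U;X)$ is $\topo_U$-$\topo_X$-continuous on $\norm{\cdot}_U$-bounded sets and maps every $\norm{\cdot}_U$-bounded set $D$ into a $\norm{\cdot}_X$-bounded set, on both of which $\gamma_U$ restricts to $\topo_U$ and $\gamma_X$ restricts to $\topo_X$ respectively, the restriction $B|_D$ is $\topo_U$-$\gamma_X$-continuous, whence $B$ is $\gamma_U$-$\gamma_X$-continuous by \cite[Theorem 2.3.1]{wiweger1961}. Composing: given a $\gamma_X$-continuous seminorm $p$, $\gamma_X$-equicontinuity yields a $\gamma_X$-continuous seminorm $\tilde p$ with $p(\euler^{-\omega t}S_tx)\leq\tilde p(x)$ for all $t\geq 0$, $x\in X$, and continuity of $B$ yields a $\gamma_U$-continuous seminorm $q$ with $\tilde p(Bu)\leq q(u)$ for all $u\in U$; hence $p(\euler^{-\omega t}S_tBu)\leq q(u)$ for all $t\geq 0$ and $u\in U$, which is quasi-$\gamma_U$-$\gamma_X$-equicontinuity of $(S_tB)_{t\geq 0}$.

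For (b), the element $\calB^{T}u=\int_0^T S_{T-t}Bu(t)\,\d t$ is well defined as a $\topo_X$-Pettis integral coinciding with the corresponding $\gamma_X$-Pettis integral by \prettyref{prop:pettis}, and $\calB^{T}$ is linear. For the norm bound, since $(X,\topo_X)'$ is norming, for $x'\in(X,\topo_X)'$ with $\norm{x'}_{X'}\leq 1$ we get $\abs{\dupa{x'}{\calB^{T}u}}=\bigl|\int_0^T\dupa{x'}{S_{T-t}Bu(t)}\,\d t\bigr|\leq\int_0^T\norm{S_{T-t}}_{\calL(X)}\norm{B}_{\calL(U;X)}\norm{u(t)}_U\,\d t\leq TM\euler^{|\omega|T}\norm{B}_{\calL(U;X)}\norm{u}_\infty$, so $\norm{\calB^{T}u}_X\leq TM\euler^{|\omega|T}\norm{B}_{\calL(U;X)}\norm{u}_\infty$ and $\calB^{T}\in\calL((C_{\tau,b}([0,T];U),\norm{\cdot}_\infty);(X,\norm{\cdot}_X))$. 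For $\gamma_\infty$-$\gamma_X$-continuity, fix a $\gamma_X$-continuous seminorm $p$, write $p=\sup_{x'\in W}\abs{\dupa{x'}{\cdot}}$ with $W\subseteq X^{\circ}=(X,\gamma_X)'$ equicontinuous, and let $q$ be the $\gamma_U$-continuous seminorm associated to $p$ by (a). Then for $x'\in W$, using the $\gamma_X$-Pettis property (legitimate since $x'\in(X,\gamma_X)'$ and the integral is the $\gamma_X$-Pettis integral), $\abs{\dupa{x'}{z}}\leq p(z)$, and (a),
\[
\abs{\dupa{x'}{\calB^{T}u}}=\Bigl|\int_0^T\dupa{x'}{S_{T-t}Bu(t)}\,\d t\Bigr|\leq\int_0^T\euler^{\omega(T-t)}q(u(t))\,\d t\leq\Bigl(\int_0^T\euler^{\omega(T-t)}\,\d t\Bigr)\sup_{t\in[0,T]}q(u(t)).
\]
Taking the supremum over $x'\in W$ bounds $p(\calB^{T}u)$ by a constant times $u\mapsto\sup_{t\in[0,T]}q(u(t))$; since $q$ is dominated by a finite maximum of seminorms from a directed system inducing $\gamma_U$, this map is a $\gamma_\infty$-continuous seminorm on $C_{\tau,b}([0,T];U)$ in the sense of \prettyref{rem:space_bounded_cont}, so $\calB^{T}$ is $\gamma_\infty$-$\gamma_X$-continuous.

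For (c), $(X,\gamma_X)$ being Mazur gives $X^{\circ}=(X,\gamma_X)'$ by \prettyref{rem:C_sequential_Mazur} (b), so every $x^{\circ}\in X^{\circ}$ is a $\gamma_X$-continuous linear functional; composing it with the $\gamma_\infty$-$\gamma_X$-continuous operator $\calB^{T}$ from (b) shows ${\calB^{T}}^{\circ}x^{\circ}={\calB^{T}}'x^{\circ}=x^{\circ}\circ\calB^{T}$ is $\gamma_\infty$-continuous, i.e.\ ${\calB^{T}}^{\circ}x^{\circ}\in(C_{\tau,b}([0,T];U),\gamma_\infty)'$.

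The main point requiring care is the mixed-topology bookkeeping in (b): interchanging $x'\in W\subseteq X^{\circ}$ with the integral is exactly where the coincidence of the $\topo_X$- and $\gamma_X$-Pettis integrals from \prettyref{prop:pettis} is used, and one must check that $u\mapsto\sup_{t\in[0,T]}q(u(t))$ is genuinely a $\gamma_\infty$-continuous seminorm (reducing a general $\gamma_U$-continuous seminorm to the directed system generating $\gamma_U$ and invoking \prettyref{rem:space_bounded_cont}). The remaining steps — well-definedness and linearity of $\calB^{T}$, the elementary norm estimate, and the composition argument for (c) — are routine.
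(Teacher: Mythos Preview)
Your proof is correct and follows essentially the same route as the paper's: in (a) you show $B$ is $\gamma_U$-$\gamma_X$-continuous by lifting the $\topo_U$-$\gamma_X$-continuity on norm-bounded sets and then compose with the quasi-$\gamma_X$-equicontinuity of $(S_t)_{t\geq 0}$ obtained from \cite[Theorem 3.17 (a)]{kruse_schwenninger2022}; in (b) you pass through the $\gamma_X$-Pettis integral and an equicontinuous set in $X_\gamma'$ to bound $p(\calB^T u)$ by a $\gamma_\infty$-seminorm of $u$, and obtain the norm bound by testing against norming functionals; (c) is the same composition argument. The only cosmetic differences are that the paper cites \cite[I.1.7 Corollary]{cooper1978} (rather than \cite[Theorem 2.3.1]{wiweger1961}) for the mixed-topology lifting of $B$, and it works directly with seminorms from the fixed directed system $\calP_{\gamma_U}$, thereby sidestepping the reduction step you flag at the end.
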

\begin{proof}
(a) Let $M\subseteq U$ be a $\norm{\cdot}_{U}$-bounded set. 
Then the restriction $B|_M\colon M \to X$ of $B$ to $M$ is $\topo_{U}|_M$-$\topo_X$-continuous. Since $B\in\calL(U;X)$, the set 
$B(M)$ is $\norm{\cdot}_{X}$-bounded. As the mixed topology $\gamma_X$ coincides with $\topo_X$ on $\norm{\cdot}_{X}$-bounded 
sets by \prettyref{defi:saks_mixed} (b), it follows that $B|_M$ is $\topo_{U}|_M$-$\gamma_X$-continuous, yielding 
that $B$ is $\gamma_U$-$\gamma_X$-continuous by \cite[I.1.7 Corollary]{cooper1978}. 
Due to \cite[Theorem 3.17 (a)]{kruse_schwenninger2022} $(S_{t}B)_{t\geq 0}$ is quasi-$\gamma_U$-$\gamma_X$-equicontinuous, proving part (a).

(b) Let $\mathcal{P}_{\gamma_X}$ and $\mathcal{P}_{\gamma_U}$ be directed systems of seminorms that induce the mixed topologies 
$\gamma_X$ and $\gamma_U$, respectively. 
For $p_{\gamma_X}\in\mathcal{P}_{\gamma_X}$ we set $V_{p_{\gamma_X}}\coloneqq \{x\in X\;|\;p_{\gamma_X}(x)<1\}$ and 
denote its polar set by 
$V_{p_{\gamma_X}}^{\circ}\coloneqq  \{x'\in X_{\gamma}'\;|\;\forall\;x\in V_{p_{\gamma_X}}:\;|x'(x)|\leq 1\}$. 
It follows from part (a) that there are $C\geq 0$ and $p_{\gamma_U}\in\mathcal{P}_{\gamma_U}$ such that 
for all $u\in C_{\tau,b}([0,T];U)$ we have
\begin{align}\label{eq:gamma_cont_conv}
  p_{\gamma_X}(\calB^{T}u)
&=\sup_{x'\in V_{p_{\gamma_X}}^{\circ}}\Bigl|\langle x',\int_{0}^{T}S_{T-t}Bu(t)\d t\rangle\Bigr| 
 \leq \sup_{x'\in V_{p_{\gamma_X}}^{\circ}}\int_{0}^{T}|\langle x',S_{T-t}Bu(t)\rangle|\d t \nonumber\\
&\leq T\sup_{x'\in V_{p_{\gamma_X}}^{\circ}}\sup_{t\in[0,T]}|\langle x',S_{T-t}Bu(t)\rangle|
 =T\sup_{t\in[0,T]}p_{\gamma_X}(S_{T-t}Bu(t))\\
&\underset{\mathclap{(a)}}{\leq} C T\sup_{t\in[0,T]}p_{\gamma_U}(u(t))\nonumber
\end{align}
where we used \cite[Proposition 22.14]{meisevogt1997} in the first and second to last equation 
to get from $p_{\gamma_X}$ to $\sup_{x'\in V_{p_{\gamma_X}}^{\circ}}$ and back. 
Thus, $\calB^{T}$ is $\gamma_{\infty}$-$\gamma_{X}$-continuous.
Furthermore, since $X_{\gamma}'$ is norming for $X$ by \cite[Lemma 5.5 (a)]{kruse_meichnser_seifert2018}, 
we may choose $\mathcal{P}_{\gamma_X}$ such that $\norm{x}_X=\sup_{p_{\gamma_X}\in\mathcal{P}_{\gamma_X}}p_{\gamma_X}(x)$ 
for all $x\in X$ by \cite[Remark 2.2 (c)]{kruse_schwenninger2022}. 
Due to our previous estimates and the exponential boundedness of $(S_{t})_{t\geq 0}$ we obtain
\begin{align*}
 \norm{\calB^{T}u}_{X}
&=\sup_{p_{\gamma_X}\in\calP_{\gamma_X}}p_{\gamma_X}(\mathcal{B}^{T}u)
 \underset{\eqref{eq:gamma_cont_conv}}{\leq} 
 T\sup_{p_{\gamma_X}\in\calP_{\gamma_X}}\sup_{t\in[0,T]}p_{\gamma_X}(S_{T-t}Bu(t))\\
&=T\sup_{t\in[0,T]}\norm{S_{T-t}Bu(t)}_{X}
 \leq T\sup_{t\in[0,T]}\norm{S_{T-t}}_{\calL(X)}\norm{Bu(t)}_{X}\\
&\leq TM\euler^{|\omega|T}\norm{B}_{\calL(U;X)}\sup_{t\in[0,T]}\norm{u(t)}_{U},
\end{align*}
yielding $\calB^{T}\in\calL(C_{\tau,b}([0,T];U);X)$.

(c) It follows from $X^{\circ}=X_{\gamma}'$ by \prettyref{rem:C_sequential_Mazur} (a) and part (b) 
that ${\calB^{T}}^{\circ}$ is the dual map of the $\gamma_{\infty}$-$\gamma_X$-continuous map $\calB^{T}$ 
and hence ${\calB^{T}}^{\circ}x^{\circ}\in (C_{\tau,b}([0,T];U),\gamma_{\infty})'$ for all $x^{\circ}\in X^{\circ}$.
\end{proof}

Now, we are ready to write down the dual system of \eqref{eq:controlsystem} and to phrase the kind
of final state observability of this dual system we are seeking for. 
Let $(X,\norm{\cdot}_X,\topo_X)$ be a sequentially complete C-sequential Saks space, 
$(U,\norm{\cdot}_U,\topo_U)$ a Saks space, $(S_t)_{t\geq 0}$ a locally $\topo_X$-bi-continuous semigroup on $X$ with generator $(-A,D(A))$, 
and $B\in\calL(U;X)$ such that $B$ is $\topo_U$-$\topo_X$-continuous on $\norm{\cdot}_{U}$-bounded sets, and $T>0$.
Using \prettyref{prop:adj_mixed} and \prettyref{prop:dual_B}, the dual system of \eqref{eq:controlsystem} is given by 
\begin{align}
\begin{split}\label{eq:obssystem}
    \dot{x}(t) & = -A^{\circ}x(t) \quad(t>0),\\
    y(t) & = B^{\circ}x(t) \quad(t\geq 0),\\
    x(0) & = x_0 \in X^\circ.
\end{split}\tag{ObsSys}
\end{align}

\begin{definition}
We say that \eqref{eq:obssystem} satisfies a \emph{final state observability estimate in $(C_{\topo,b}([0,T];U),\gamma_{\infty})'$} 
if there exists $C_{\mathrm{obs}}\geq 0$ such that 
\[
\norm{S_{T}^{\circ}x^{\circ}}_{X^{\circ}}
\leq C_{\mathrm{obs}}\norm{{\calB^{T}}^{\circ}x^{\circ}}_{(C_{\topo,b}([0,T];U),\gamma_{\infty})'}
\] 
for all $x^{\circ}\in X^{\circ}$.
\end{definition}

We spend the remaining part of this section with proving that cost-uniform approximate $\topo_X$-null-controllability 
in time $T$ via $C_{\topo,b}([0,T];U)$ of \eqref{eq:controlsystem} is 
equivalent to a final state observability estimate of \eqref{eq:obssystem} in 
$(C_{\topo,b}([0,T];U),\gamma_{\infty})'$, and that the latter space is actually a certain space of vector measures.

Let $\Omega$ be a Hausdorff locally compact space, $(U,\vartheta_U)$ a Hausdorff locally convex space and 
$\mathcal{P}_{\vartheta_U}$ a directed system of seminorms that induces $\vartheta_U$.
We denote by $\mathscr{B}(\Omega)$ the Borel $\sigma$-algebra on $\Omega$, by $M(\Omega)$ the space of all bounded complex 
(or real) Borel measures on $\Omega$, and by $M(\Omega;(U,\vartheta_U)')$ the space of all finitely additive 
vector measures $\nu\colon\mathscr{B}(\Omega)\to(U,\vartheta_U)'$, i.e.~$\nu(N_{1}\cup N_{2})=\nu(N_{1})+\nu(N_{2})$ 
for all disjoint $N_{1},N_{2}\in\mathscr{B}(\Omega)$, such that 
\begin{enumerate}
\item[(i)] $\nu(\cdot)u\in M(\Omega)$ for all $u\in U$, and
\item[(ii)] there exist $p\in\mathcal{P}_{\vartheta_U}$ and $C\geq 0$ such that 
\[
\sup_{(\mathcal{N},\mathcal{U}_p)}\bigl|\sum_{(N,u)\in (\mathcal{N},\mathcal{U}_p)}\nu(N)u\bigr|\leq C
\]
where the supremum is taken over all finite partitions $\mathcal{N}$ of $\Omega$ into disjoint Borel sets 
and all finite sets $\mathcal{U}_p$ in $U$ such that $p(u)\leq 1$ for all $u\in\mathcal{U}_p$. 
\end{enumerate}
Let $(U,\norm{\cdot}_U,\topo_U)$ be a Saks space and $T>0$. 
By \cite[Theorem 1]{wells1965}, the compactness of $[0,T]$ and \prettyref{rem:space_bounded_cont} (a) the map 
\[
\Theta_\gamma\colon M([0,T];U_{\gamma}')\to (C_{\tau,b}([0,T];U),\gamma_\infty)',\;
\Theta_\gamma(\nu)(u)\coloneqq \int_{0}^{T}u(t)\d \nu ,
\]
is a linear isomorphism. By the same theorem in combination with \cite[Lemma 4]{wells1965} the map 
\[
\Theta_{\norm{\cdot}}\colon M([0,T];U')\to (C([0,T];(U,\norm{\cdot}_{U})),\norm{\cdot}_\infty)',\;
\Theta_{\norm{\cdot}}(\nu)(u)\coloneqq \int_{0}^{T}u(t)\d \nu ,
\]
is a topological isomorphism w.r.t.~the semivariation norm on $M([0,T];U')$ and the dual norm on 
$(C([0,T];(U,\norm{\cdot}_{U})),\norm{\cdot}_\infty)'$ and 
\[
 \norm{\Theta_{\norm{\cdot}}(\nu)}_{(C([0,T];(U,\norm{\cdot}_{U})),\norm{\cdot}_\infty)'}
=\norm{\nu}_{var}\qquad (\nu\in M([0,T];U'))
\]
where the semivariation norm is given by 
\[
\norm{\nu}_{var}\coloneqq  
\sup_{(\mathcal{N},\mathcal{U}_{\norm{\cdot}_U})}\bigl|\sum_{(N,u)\in (\mathcal{N},\mathcal{U}_{\norm{\cdot}_U})}\nu(N)u\bigr|
\qquad (\nu\in M([0,T];U'))
\]
and the supremum is taken over all $(\mathcal{N},\mathcal{U}_{\norm{\cdot}_U})$ as in (ii) above 
with $p$ replaced by $\norm{\cdot}_U$.
We note that it follows from $U_{\gamma}'$ being a topological subspace of $U'$ (see \cite[I.1.18 Proposition]{cooper1978}) 
and $\gamma_{U}$ being coarser than the $\norm{\cdot}_{U}$-topology, that $M([0,T];U_{\gamma}')$ is a topological 
subspace of $M([0,T];U')$ (if equipped with the relative topology).

\begin{theorem}\label{thm:dual_volterra_semivar}
Let $(X,\norm{\cdot}_X,\topo_X)$ be a sequentially complete C-sequential Saks space,  
$(U,\norm{\cdot}_U,\topo_U)$ a Saks space, 
$(S_t)_{t\geq 0}$ a locally $\topo_X$-bi-continuous semigroup on $X$ 
and $B\in\calL(U;X)$ such that $B$ is also $\topo_U$-$\topo_X$-continuous on $\norm{\cdot}_{U}$-bounded sets. 
For $T>0$ we have $B^{\circ}S_{(\cdot)}^{\circ}x^{\circ}\odot\lambda\in M([0,T];U_{\gamma}')$ and 
${\calB^{T}}^{\circ}x^{\circ}=\Theta_{\gamma}(B^{\circ}S_{T-(\cdot)}^{\circ}x^{\circ}\odot\lambda)$ 
for all $x^{\circ}\in X^{\circ}$ as well as 
\[
\norm{{\calB^{T}}^{\circ}x^{\circ}}_{(C([0,T];(U,\norm{\cdot}_{U})),\norm{\cdot}_\infty)'}=\norm{B^{\circ}S_{(\cdot)}^{\circ}x^{\circ}\odot\lambda}_{var} 
\qquad (x^{\circ}\in X^{\circ})
\]
where
\[
(B^{\circ}S_{(\cdot)}^{\circ}x^{\circ}\odot\lambda)(N)u
\coloneqq  \int_{N}\langle B^{\circ}S_{t}^{\circ}x^{\circ},u\rangle \d t
\qquad (N\in\mathscr{B}([0,T]),\,u\in U)
\] 
for the Lebesgue measure $\lambda$. 
\end{theorem}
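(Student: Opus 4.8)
The plan is, for a fixed $x^{\circ}\in X^{\circ}$, to verify directly that the finitely additive set function
\[
\mu\coloneqq B^{\circ}S_{T-(\cdot)}^{\circ}x^{\circ}\odot\lambda,\qquad \mu(N)u=\int_{N}\langle x^{\circ},S_{T-t}Bu\rangle\,\d t=\int_{N}\langle B^{\circ}S_{T-t}^{\circ}x^{\circ},u\rangle\,\d t\quad(N\in\mathscr{B}([0,T]),\,u\in U),
\]
lies in $M([0,T];U_{\gamma}')$, that $\Theta_{\gamma}(\mu)={\calB^{T}}^{\circ}x^{\circ}$, and that $\norm{\mu}_{var}$ computes the dual norm in the statement via the isometry of the Wells map $\Theta_{\norm{\cdot}}$. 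Since $t\mapsto T-t$ is a measure-preserving Borel automorphism of $[0,T]$, the measures $B^{\circ}S_{(\cdot)}^{\circ}x^{\circ}\odot\lambda$ and $\mu$ are obtained from one another by composition with it; in particular they share membership in $M([0,T];U_{\gamma}')$ and have the same semivariation, so it suffices to work with $\mu$.

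Two facts are needed first. (i) For fixed $u\in U$ the orbit $t\mapsto S_{T-t}Bu$ is $\topo_{X}$-continuous and $\norm{\cdot}_{X}$-bounded on $[0,T]$, so $t\mapsto\langle x^{\circ},S_{T-t}Bu\rangle$ is continuous and bounded because $x^{\circ}$ is $\topo_{X}$-sequentially continuous on $\norm{\cdot}_{X}$-bounded sets; moreover, for $u\in C_{\topo,b}([0,T];U)$ the map $t\mapsto\langle x^{\circ},S_{T-t}Bu(t)\rangle$ is continuous by \prettyref{prop:pettis_integrand} applied to $v(t):=Bu(t)$ together with this property of $x^{\circ}$. (ii) Since $X$ is C-sequential we have $X^{\circ}=X_{\gamma}'$ by \prettyref{rem:C_sequential_Mazur}, and by \prettyref{prop:convolution_cont}~(a) the family $(S_{T-t}B)_{t\in[0,T]}$ is $\gamma_{U}$-$\gamma_{X}$-equicontinuous on the compact interval $[0,T]$; fixing a $\gamma_{X}$-continuous seminorm $q$ with $|\langle x^{\circ},\cdot\rangle|\leq q$, this provides a $\gamma_{U}$-continuous seminorm $p$ on $U$ and $C\geq0$ with $|\langle B^{\circ}S_{T-t}^{\circ}x^{\circ},u\rangle|\leq q(S_{T-t}Bu)\leq C\,p(u)$ for all $t\in[0,T]$ and $u\in U$.

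Granting these, $\mu\in M([0,T];U_{\gamma}')$ is routine: $\mu(N)$ is linear with $|\mu(N)u|\leq C\lambda(N)\,p(u)$, so $\mu(N)\in U_{\gamma}'$; $\mu$ is finitely additive by additivity of the Lebesgue integral; $N\mapsto\mu(N)u$ is the indefinite integral of a bounded continuous function, hence lies in $M([0,T])$; and for a finite Borel partition $\mathcal{N}$ of $[0,T]$ and $u_{N}\in U$ with $p(u_{N})\leq1$ one has $\bigl|\sum_{N\in\mathcal{N}}\mu(N)u_{N}\bigr|\leq C\sum_{N\in\mathcal{N}}\lambda(N)=CT$, which is condition~(ii) in the definition of $M([0,T];U_{\gamma}')$. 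For $\Theta_{\gamma}(\mu)={\calB^{T}}^{\circ}x^{\circ}$, both sides lie in $(C_{\topo,b}([0,T];U),\gamma_{\infty})'$ — the left one by the membership just shown, the right one by \prettyref{prop:convolution_cont}~(c) — so it suffices to evaluate on an arbitrary $u\in C_{\topo,b}([0,T];U)$. Since $\calB^{T}u=\int_{0}^{T}S_{T-t}Bu(t)\,\d t$ is the $\gamma_{X}$-Pettis integral by \prettyref{prop:pettis} and $x^{\circ}\in X_{\gamma}'$, we obtain $\langle{\calB^{T}}^{\circ}x^{\circ},u\rangle=\langle x^{\circ},\calB^{T}u\rangle=\int_{0}^{T}\langle B^{\circ}S_{T-t}^{\circ}x^{\circ},u(t)\rangle\,\d t$; on the other hand $\Theta_{\gamma}(\mu)(u)=\int_{0}^{T}u(t)\,\d\mu$ is the limit of the sums $\sum_{N\in\mathcal{N}}\langle\mu(N),u(t_{N})\rangle=\sum_{N\in\mathcal{N}}\int_{N}\langle B^{\circ}S_{T-t}^{\circ}x^{\circ},u(t_{N})\rangle\,\d t$ over finer and finer tagged Borel partitions, and the estimate $|\langle B^{\circ}S_{T-t}^{\circ}x^{\circ},u(t_{N})-u(t)\rangle|\leq C\,p(u(t_{N})-u(t))$ together with the uniform $\gamma_{U}$-continuity of $u$ on $[0,T]$ shows that this limit equals $\int_{0}^{T}\langle B^{\circ}S_{T-t}^{\circ}x^{\circ},u(t)\rangle\,\d t$ as well. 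Hence $\Theta_{\gamma}(\mu)={\calB^{T}}^{\circ}x^{\circ}$, i.e.\ ${\calB^{T}}^{\circ}x^{\circ}=\Theta_{\gamma}(B^{\circ}S_{T-(\cdot)}^{\circ}x^{\circ}\odot\lambda)$.

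For the semivariation identity I would regard $\mu$ as an element of $M([0,T];U')$ — legitimate since $p\leq C'\norm{\cdot}_{U}$ for some $C'$, so the $p$-semivariation bound forces finite $\norm{\cdot}_{U}$-semivariation — and note that for $u\in C([0,T];(U,\norm{\cdot}_{U}))\subseteq C_{\topo,b}([0,T];U)$ the integrals defining $\Theta_{\norm{\cdot}}(\mu)(u)$ and $\Theta_{\gamma}(\mu)(u)$ coincide, both resting on the pairing $\langle\mu(N),u(t)\rangle$ (meaningful since $\mu(N)\in U_{\gamma}'\subseteq U'$) and on the same simple-function approximations of $u$. Thus $\Theta_{\norm{\cdot}}(\mu)$ is the restriction of ${\calB^{T}}^{\circ}x^{\circ}$ to $C([0,T];(U,\norm{\cdot}_{U}))$, and since $\Theta_{\norm{\cdot}}$ is isometric for the semivariation norm, $\norm{{\calB^{T}}^{\circ}x^{\circ}}_{(C([0,T];(U,\norm{\cdot}_{U})),\norm{\cdot}_{\infty})'}=\norm{\mu}_{var}=\norm{B^{\circ}S_{(\cdot)}^{\circ}x^{\circ}\odot\lambda}_{var}$, the last equality once more by the reflection $t\mapsto T-t$. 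The one genuinely technical point I expect is the identification $\int_{0}^{T}u\,\d\mu=\int_{0}^{T}\langle B^{\circ}S_{T-t}^{\circ}x^{\circ},u(t)\rangle\,\d t$: this is a Fubini-type statement, cleanly handled by approximating $u$ by simple functions in the seminorm $f\mapsto\sup_{t\in[0,T]}p(f(t))$ and passing to the limit through the $p$-semivariation bound of $\mu$, exactly as in the Riemann-sum estimate above; the remainder is bookkeeping with the two Wells isomorphisms and the reflection $t\mapsto T-t$.
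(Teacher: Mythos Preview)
Your proposal is correct and follows essentially the same route as the paper: both use $X^{\circ}=X_{\gamma}'$ together with the quasi-$\gamma_U$-$\gamma_X$-equicontinuity of $(S_{T-t}B)_{t\in[0,T]}$ from \prettyref{prop:convolution_cont} to produce the uniform estimate $|\langle B^{\circ}S_{T-t}^{\circ}x^{\circ},u\rangle|\leq C\,p_{\gamma_U}(u)$, verify membership of $\mu$ in $M([0,T];U_{\gamma}')$ from this, identify $\Theta_{\gamma}(\mu)={\calB^{T}}^{\circ}x^{\circ}$ via the Pettis integral, and then invoke the Wells isometry $\Theta_{\norm{\cdot}}$ and the reflection $t\mapsto T-t$ for the semivariation identity. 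The only notable difference is that you spell out the Fubini-type identification $\int_{0}^{T}u\,\d\mu=\int_{0}^{T}\langle B^{\circ}S_{T-t}^{\circ}x^{\circ},u(t)\rangle\,\d t$ via simple-function approximation and uniform $\gamma_U$-continuity of $u$ on the compact interval, whereas the paper simply asserts this equality.
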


\begin{proof}
First, we recall that $X^{\circ}=X_\gamma'$ by \prettyref{rem:C_sequential_Mazur} (a) 
as the C-sequential space $(X,\gamma_X)$ is Mazur. 
Let $x^{\circ}\in X^{\circ}$.
Due to \prettyref{prop:convolution_cont} (b) and \prettyref{prop:adj_mixed} $\calB^{T}$ is 
$\gamma_{\infty}$-$\gamma_X$-continuous and 
\[
  \langle{\calB^{T}}^{\circ}x^{\circ},u\rangle
=\int_{0}^{T}\langle x^{\circ},S_{T-t}Bu(t)\rangle \d t 
=\int_{0}^{T}\langle B^{\circ}S_{T-t}^{\circ}x^{\circ},u(t)\rangle \d t 
\]
for all $u\in C_{\tau,b}([0,T];U)$. 

For $N\in\mathscr{B}([0,T])$ and $u\in U$ we define $(B^{\circ}S_{(\cdot)}^{\circ}x^{\circ}\odot\lambda)(N)u
\coloneqq  \int_{N}\langle B^{\circ}S_{t}^{\circ}x^{\circ},u\rangle \d t$ and show that $B^{\circ}S_{T-(\cdot)}^{\circ}x^{\circ}\odot\lambda \in M([0,T];U_{\gamma}')$. 
By the proof of \prettyref{prop:convolution_cont} (b) 
there are $C\geq 0$ and $p_{\gamma_U}\in\mathcal{P}_{\gamma_U}$ such that 
\[
 |\langle B^{\circ}S_{T-t}^{\circ}x^{\circ},u\rangle|
=|\langle x^{\circ},S_{T-t}Bu\rangle|
\leq Cp_{\gamma_U}(u)
\]
for all $t\in[0,T]$ and all $u\in U$. In combination with the continuity of the map 
$t\mapsto \langle B^{\circ}S_{T-t}^{\circ}x^{\circ},u\rangle$ on $[0,T]$ by \prettyref{prop:adj_mixed} 
and \prettyref{prop:dual_B} this implies that 
$B^{\circ}S_{T-(\cdot)}^{\circ}x^{\circ}\odot\lambda\colon\mathscr{B}([0,T])\to U_{\gamma}'$ is a well-defined 
finitely additive vector measure and that
\[
(B^{\circ}S_{T-(\cdot)}^{\circ}x^{\circ}\odot\lambda)(\boldsymbol{\cdot})u=\int_{(\boldsymbol{\cdot})}\langle B^{\circ}S_{T-t}^{\circ}x^{\circ},u\rangle \d t
 \qquad(u\in U)
\]
belongs to $M(\Omega)$. Let $\mathcal{N}$ be a finite partition of $[0,T]$ into disjoint Borel sets 
and $\mathcal{U}_{p_{\gamma_U}}$ a finite subset of $U$ such that $p_{\gamma_U}(u)\leq 1$ for all $u\in\mathcal{U}_{p_{\gamma_U}}$. 
Then we have 
\begin{align*}
  \bigl|\sum_{(N,u)\in (\mathcal{N},\mathcal{U}_{p_{\gamma_U}})}(B^{\circ}S_{T-(\cdot)}^{\circ}x^{\circ}\odot\lambda)(N)u\bigr|
&=\bigl|\sum_{(N,u)\in (\mathcal{N},\mathcal{U}_{p_{\gamma_U}})}
  \int_{N}\langle B^{\circ}S_{T-t}^{\circ}x^{\circ},u\rangle \d t\bigr|\\
&\leq \sum_{(N,u)\in (\mathcal{N},\mathcal{U}_{p_{\gamma_U}})}\lambda(N)Cp_{\gamma_U}(u)\\
&\leq C\sum_{N\in\mathcal{N}}\lambda(N)
 = C\lambda([0,T])
 = CT,
\end{align*} 
yielding $B^{\circ}S_{T-(\cdot)}^{\circ}x^{\circ}\odot\lambda\in M([0,T];U_{\gamma}')$. 
Analogously, $B^{\circ}S_{(\cdot)}^{\circ}x^{\circ}\odot\lambda\in M([0,T];U_{\gamma}')$.

Finally, since 
\begin{align*}
  \langle{\calB^{T}}^{\circ}x^{\circ},u\rangle
&=\int_{0}^{T}\langle B^{\circ}S_{T-t}^{\circ}x^{\circ},u(t)\rangle \d t 
 =\int_{0}^{T}u(t)\d (B^{\circ}S_{T-(\cdot)}^{\circ}x^{\circ}\odot\lambda)\\
&=\Theta_\gamma(B^{\circ}S_{T-(\cdot)}^{\circ}x^{\circ}\odot\lambda)(u)
\end{align*} 
for all $u\in C_{\tau,b}([0,T];U)$ and 
\[
 \langle{\calB^{T}}^{\circ}x^{\circ},u\rangle
=\Theta_\gamma(B^{\circ}S_{T-(\cdot)}^{\circ}x^{\circ}\odot\lambda)(u)
=\Theta_{\norm{\cdot}}(B^{\circ}S_{T-(\cdot)}^{\circ}x^{\circ}\odot\lambda)(u)
\]
for all $u\in C([0,T];(U,\norm{\cdot}_{U}))$, it holds that 
\begin{align*}
  \norm{{\calB^{T}}^{\circ}x^{\circ}}_{(C([0,T];(U,\norm{\cdot}_{U})),\norm{\cdot}_\infty)'}
&=\norm{\Theta_{\norm{\cdot}}(B^{\circ}S_{T-(\cdot)}^{\circ}x^{\circ}\odot\lambda)}_{(C([0,T];(U,\norm{\cdot}_{U})),\norm{\cdot}_\infty)'}\\
&=\norm{B^{\circ}S_{T-(\cdot)}^{\circ}x^{\circ}\odot\lambda}_{var}.
\end{align*} 
Let $\mathcal{N}$ be a finite partition of $[0,T]$ and $\mathcal{U}_{\norm{\cdot}_U}$ a finite set in $U$ such that $\norm{u}_{U}\leq 1$ for all $u\in \mathcal{U}_{\norm{\cdot}_U}$. Then $T-\mathcal{N}\coloneqq  \{T-N \;|\; N\in\mathcal{N}\}$ is also a finite partition of $[0,T]$, and
\begin{align*}
    \sum_{(N,u)\in (\mathcal{N},\mathcal{U}_{\norm{\cdot}_U})}(B^{\circ}S_{T-(\cdot)}^{\circ}x^{\circ}\odot\lambda)(N)u 
    & = \sum_{(N,u)\in (\mathcal{N},\mathcal{U}_{\norm{\cdot}_U})} \int_{N}\langle B^{\circ}S_{T-t}^{\circ}x^{\circ},u\rangle \d t \\
    & = -\sum_{(T-N,u)\in (T-\mathcal{N},\mathcal{U}_{\norm{\cdot}_U})}\; \int_{T-N}\langle B^{\circ}S_{t}^{\circ}x^{\circ},u\rangle \d t.
\end{align*}
Thus, we obtain
\[\norm{{\calB^{T}}^{\circ}x^{\circ}}_{(C([0,T];(U,\norm{\cdot}_{U})),\norm{\cdot}_\infty)'} = \norm{B^{\circ}S_{T-(\cdot)}^{\circ}x^{\circ}\odot\lambda}_{var} = \norm{B^{\circ}S_{(\cdot)}^{\circ}x^{\circ}\odot\lambda}_{var}. \qedhere\]
\end{proof}


\begin{theorem}\label{thm:dual_final_state}
Let $(X,\norm{\cdot}_X,\topo_X)$ be a sequentially complete C-sequential Saks space,  
$(U,\norm{\cdot}_U,\topo_U)$ a Saks space, 
$(S_t)_{t\geq 0}$ a locally $\topo_X$-bi-continuous semigroup on $X$ 
and $B\in\calL(U;X)$ such that $B$ is also $\topo_U$-$\topo_X$-continuous on $\norm{\cdot}_{U}$-bounded sets, and $T>0$.
Then the following assertions are equivalent:
\begin{enumerate}
\item The system in \eqref{eq:controlsystem} is cost-uniform approximately $\topo_X$-null-controllable 
in time $T$ via $C_{\tau,b}([0,T];U)$. 
\item The system in \eqref{eq:obssystem} satisfies a final state observability estimate in $(C_{\topo,b}([0,T];U),\gamma_{\infty})'$.
\end{enumerate}
If additionally $\topo_U=\topo_{\norm{\cdot}_U}$, then each of the preceding assertions is equivalent to:
\begin{enumerate}
\item[(c)] There exists $C_{\mathrm{obs}}\geq 0$ such that 
\[
\forall\;x^{\circ}\in X^{\circ}:\quad
\norm{S_{T}^{\circ}x^{\circ}}_{X^{\circ}}\leq C_{\mathrm{obs}}\norm{B^{\circ}S_{(\cdot)}^{\circ}x^{\circ}\odot\lambda}_{var}. 
\] 
\end{enumerate}
\end{theorem}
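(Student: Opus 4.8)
The plan is to deduce (a)$\Leftrightarrow$(b) from \prettyref{prop:cost_uniform_approx_null_contr} by a single well-chosen specialisation, and then to add the equivalence with (c) by invoking the vector-measure identity of \prettyref{thm:dual_volterra_semivar}.

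First I would apply \prettyref{prop:cost_uniform_approx_null_contr} with $V\coloneqq (X,\norm{\cdot}_X)$, $W\coloneqq (C_{\topo,b}([0,T];U),\norm{\cdot}_\infty)$ — a Banach space by \prettyref{rem:space_bounded_cont}~(a) — $Z\coloneqq (X,\norm{\cdot}_X,\topo_X)$, $F\coloneqq S_T\in\calL(X)=\calL(V;Z)$ and $G\coloneqq \calB^T$, which lies in $\calL(C_{\topo,b}([0,T];U);X)=\calL(W;Z)$ by \prettyref{prop:convolution_cont}~(b). Since $(X,\gamma_X)$ is C-sequential, hence Mazur, \prettyref{rem:C_sequential_Mazur}~(a) yields $Z_\gamma'=X_\gamma'=X^{\circ}$; by \prettyref{prop:adj_mixed} the restriction of $F'=S_T'$ to $X^{\circ}$ is $S_T^{\circ}$, and by \prettyref{prop:convolution_cont}~(c) the restriction of $G'={\calB^T}'$ to $X^{\circ}$ is ${\calB^T}^{\circ}$, which moreover takes values in $(C_{\topo,b}([0,T];U),\gamma_\infty)'$. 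Because the norm on $X^{\circ}$ is the one inherited from $X'$ and, by \prettyref{rem:space_bounded_cont}~(b), $\norm{{\calB^T}^{\circ}x^{\circ}}_{(C_{\topo,b}([0,T];U),\gamma_\infty)'}$ is by definition the dual Banach-space norm of ${\calB^T}^{\circ}x^{\circ}$ in $C_{\topo,b}([0,T];U)'=W'$, assertion~(a) of \prettyref{prop:cost_uniform_approx_null_contr} reads verbatim as assertion~(b) of the theorem, with the same constant.

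Next I would recognise assertion~(c) of \prettyref{prop:cost_uniform_approx_null_contr} as assertion~(a): with the above choices it states that there is $c\geq 0$ such that for every $x_0\in X$, $\varepsilon>0$ and $p_{\gamma_X}\in\calP_{\gamma_X}$ there is $u\in C_{\topo,b}([0,T];U)$ with $\norm{u}_{\infty}\leq c\norm{x_0}_X$ and $p_{\gamma_X}(S_Tx_0+\calB^Tu)\leq\varepsilon$; since $S_Tx_0+\calB^Tu=x(T)$ for the mild solution $x$ of \eqref{eq:controlsystem} driven by $u$ (Duhamel), this is cost-uniform approximate $\gamma_X$-null-controllability in time $T$ via $C_{\topo,b}([0,T];U)$, which by \prettyref{rem:gamma_closure} (replacing $\calP_{\gamma_X}$ by a seminorm system inducing $\topo_X$) coincides with cost-uniform approximate $\topo_X$-null-controllability. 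Hence \prettyref{prop:cost_uniform_approx_null_contr} gives (a)$\Leftrightarrow$(b) with a common constant.

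Finally, if $\topo_U=\topo_{\norm{\cdot}_U}$ then $C_{\topo,b}([0,T];U)=C([0,T];(U,\norm{\cdot}_U))$ as normed spaces, so $\norm{{\calB^T}^{\circ}x^{\circ}}_{(C_{\topo,b}([0,T];U),\gamma_\infty)'}$ equals the dual norm of ${\calB^T}^{\circ}x^{\circ}$ on $(C([0,T];(U,\norm{\cdot}_U)),\norm{\cdot}_\infty)'$, which by \prettyref{thm:dual_volterra_semivar} equals $\norm{B^{\circ}S_{(\cdot)}^{\circ}x^{\circ}\odot\lambda}_{var}$; thus (b) and (c) are the same estimate, and together with (a)$\Leftrightarrow$(b) the proof is finished. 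I do not anticipate any real analytical difficulty — the argument is essentially bookkeeping of the dualities set up earlier in the section; the only delicate point is to make sure that the norm $\norm{{\calB^T}^{\circ}x^{\circ}}_{(C_{\topo,b}([0,T];U),\gamma_\infty)'}$ used in (b) is genuinely the Banach-space dual norm $\norm{G'x^{\circ}}_{W'}$ entering \prettyref{prop:cost_uniform_approx_null_contr}, which is exactly \prettyref{rem:space_bounded_cont}~(b).
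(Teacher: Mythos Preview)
Your proposal is correct and follows essentially the same route as the paper: apply \prettyref{prop:cost_uniform_approx_null_contr} with $V=Z=X$, $W=(C_{\topo,b}([0,T];U),\norm{\cdot}_\infty)$, $F=S_T$, $G=\calB^T$, identify $Z_\gamma'=X^{\circ}$ via the Mazur property, use \prettyref{rem:gamma_closure} to pass from $\gamma_X$- to $\topo_X$-null-controllability, and invoke \prettyref{thm:dual_volterra_semivar} for the additional equivalence with (c) when $\topo_U=\topo_{\norm{\cdot}_U}$. Your write-up is simply more explicit than the paper's terse version, spelling out the roles of \prettyref{rem:space_bounded_cont}~(b), \prettyref{prop:adj_mixed} and \prettyref{prop:convolution_cont}~(c) in matching the dual norms and operators.
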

\begin{proof}
This statement follows from the equivalence of (a) and (c) in \prettyref{prop:cost_uniform_approx_null_contr} 
with $V\coloneqq  Z\coloneqq  X$, $W\coloneqq  C_{\tau,b}([0,T];U)$ equipped with $\norm{\cdot}_{\infty}$, $F\coloneqq  S_{T}$ and $G\coloneqq \calB^{T}$ 
in combination with \prettyref{thm:dual_volterra_semivar}, \prettyref{rem:gamma_closure} and $X^{\circ}=X_\gamma'$.
\end{proof}

Even in the setting of Banach spaces, i.e.~$\topo_X=\topo_{\norm{\cdot}_X}$, $\topo_U=\topo_{\norm{\cdot}_U}$, 
where we have $C_{\topo,b}([0,T];U)=C([0,T];(U,\norm{\cdot}_{U}))$ and 
\[
(C_{\topo,b}([0,T];U),\gamma_{\infty})'=(C([0,T];(U,\norm{\cdot}_{U})),\norm{\cdot}_{\infty})'=M([0,T];U')
\] 
as well as $X^{\circ}=X'$, the results of \prettyref{thm:dual_final_state} seem to be new.

\section*{Statements and Declarations}

We thank F.~Gabel for valuable comments and corrections. 
K.~Kruse acknowledges the support by the Deutsche Forschungsgemeinschaft (DFG) within the Research Training 
Group GRK 2583 ``Modeling, Simulation and Optimization of Fluid Dynamic Applications''.
 
\bibliography{biblio_Observability_bi-continuous}
\bibliographystyle{plainnat}
\end{document}